\theoremstyle{plain}
\newtheorem{thm}{Theorem}
\newtheorem{theorem}[thm]{Theorem}
\theoremstyle{definition}
\newtheorem{ex}[thm]{Example}
\theoremstyle{remark}
\newtheorem{rem}[thm]{Remark}
\newtheorem*{remark*}{Remark}
\numberwithin{equation}{section}
\newcommand{\cH}{{\mathcal{H}}}
\newcommand{\field}[1]{{\mathbb{#1}}}
\newcommand{\NN}{\field{N}}
\newcommand{\ZZ}{\field{Z}}
\newcommand{\RR}{\field{R}}
\newcommand{\CC}{\field{C}}
\begin{document}

\title[Trace formula for the magnetic Laplacian]{Trace formula for the magnetic Laplacian at zero energy level}



\author[Y. A. Kordyukov]{Yuri A. Kordyukov}
\address{Institute of Mathematics, Ufa Federal Research Centre, Russian Academy of Sciences, 112~Chernyshevsky str., 450008 Ufa, Russia} \email{yurikor@matem.anrb.ru}


\subjclass[2010]{Primary 58J37; Secondary 53D50}

\keywords{Magnetic Laplacian, trace formula, semiclassical asymptotics, Gutzwiller formula}


\begin{abstract}
The paper is devoted to the trace formula for the magnetic Laplacian associated with a magnetic system on a compact manifold. This formula is a natural generalization of the semiclassical Gutzwiller trace formula and reduces to it in the case when the magnetic field form is exact. It differs somewhat from the Guillemin-Uribe trace formula studied in the author's previous work with I.A. Taimanov. Moreover, in contrast to that work, the focus is on the trace formula at the zero energy level, which is a critical energy level. The paper gives an overview of the main notions and results related to the trace formula at the zero energy level, describes various approaches to its proof, and gives concrete examples of its computation. In addition, a brief review of the Gutzwiller trace formula for regular and critical energy levels is given.
\end{abstract}

\date{\today}

\dedicatory{Dedicated to I. A. Taimanov on the occasion of his 60th birthday}

\maketitle
\tableofcontents
\section{Introduction}
A magnetic system on a compact manifold $M$ of dimension $d$ is given by a Riemannian metric $g$ and a closed differential 2-form of the magnetic field $F$ on $M$. It is well known that if $F$ satisfies the quantization condition
\begin{equation}\label{e:prequantum}
[F]\in H^2(M,2\pi \mathbb Z),
\end{equation}
then there exists a Hermitian line bundle $(L,h^L)$ with a Hermitian connection $\nabla^L : C^\infty(M,L)\to C^\infty(M,T^*M\otimes L)$ such that the curvature form $R^L$ of the connection $\nabla^L$ is related to the form $F$ by the formula
\begin{equation}\label{e:1.2}
F = iR^L.
\end{equation}
The Riemannian metric on $M$ and the Hermitian structure on $L$ allow us to define inner products on $C^\infty(M,L)$ and $C^\infty(M,T^*M\otimes L)$ and the adjoint operator
$
(\nabla^L)^* : C^\infty(M,T^*M\otimes L)\to C^\infty(M,L).
$
The Bochner Laplacian associated with the bundle $L$ (or the magnetic Laplacian) is a second-order differential operator acting in the space $C^\infty(M,L)$ by the formula
\[
\Delta^{L} = (\nabla^L)^*\nabla^L.
\]

For any $N\in \NN$ consider the $N$th tensor power $L^N=L^{\otimes N}$ of the line bundle $L$. Denote by $\Delta^{L^N}$ the corresponding magnetic Laplacian in the space $C^\infty(M,L^N)$. The parameter $\hbar = \frac 1N$ can be treated as a semiclassical parameter and, accordingly, the passage to the limit $N\to \infty$ as a semiclassical limit. This point of view is well known and generally accepted in geometric quantization (see, for example, \cite{berezin74}). 

We will study the Bochner-Schr\"odinger operator $H_N$ acting on $C^\infty(M,L^N)$ by the formula
\begin{equation}\label{e:BochnerHN}
H_{N}=\Delta^{L^N}+NV,
\end{equation}
where $V\in C^\infty(M)$ is a real-valued function.
Additionally, one can consider an arbitrary Hermitian vector bundle $(E,h^E)$ with a Hermitian connection $\nabla^E$, but for the sake of simplicity, we will not consider this case in the paper.
Such an operator was introduced and studied by J.-P. Demailly in connection with holomorphic Morse inequalities for Dolbeault cohomology associated with high tensor powers of a holomorphic Hermitian bundle over a compact complex manifold \cite{Demailly85} (see also \cite{bismut87,Demailly91,ma-ma:book} and the references given there). This operator also has a quantum mechanical interpretation as a magnetic Schr\"odinger operator. It describes the motion of a charged quantum particle on a manifold $M$ in an external electromagnetic field given by the magnetic form $NF$ and the electric potential $NV$.  

If the Hermitian line bundle $(L,h^L)$ is trivial on an open subset $U$ of the manifold $M$, i.e.
\[
L\left|_U\right.\cong U\times \CC\ \text{and}\ |(x,z)|_{h^L}=|z|,\quad (x,z)\in U \times \CC,
\]
and the Hermitian connection $\nabla^L$ is written as
\begin{equation}\label{e:d-A}
\nabla^L = d-i \mathbf A : C^\infty(U)\to C^\infty(U,T^*U),
\end{equation}
with some real 1-form $\mathbf A$ (connection form or magnetic potential), then
\[
R^L=-id\mathbf A,\quad F=d\mathbf A.
\]
In this case, the operator $H_N$ has the form
\[
H_N=(d-iN\mathbf A)^*(d-iN\mathbf A)+NV, \quad N\in \NN.
\]
It is related to the semiclassical magnetic Schr\"odinger operator
\[
\mathcal H^\hbar=(i\hbar d+\mathbf A)^*(i\hbar d+\mathbf A)+\hbar V
\]
by the formula
\begin{equation}\label{e:Bochner-Hh}
H_N=\hbar^{-2}\mathcal H^\hbar, \quad \hbar=\frac{1}{N},\quad N\in \NN. 
\end{equation}

In particular, suppose that one can choose local coordinates $(x^1,\ldots,x^d)$ on $U$. We write the connection form as $\mathbf A= \sum_{j=1}^dA_j(x)\,dx^j$. Then $F$ has the form
\[
F=d\mathbf A=\sum_{j<k}F_{jk}\,dx^j\wedge dx^k, \quad
F_{jk}=\frac{\partial A_k}{\partial x^j}-\frac{\partial
A_j}{\partial x^k}.
\]
The operator $\Delta^{L^N}$ is written as
\begin{multline}\label{e:DLp}
H_N=-\frac{1}{\sqrt{|g(x)|}}\sum_{1\leq j,\ell\leq d}\left(\frac{\partial}{\partial x^j} -iNA_j(x)\right)\\ \times \left[\sqrt{|g(x)|}
g^{j\ell}(x) \left(\frac{\partial}{\partial
x^\ell}-iNA_\ell(x)\right)\right]+NV(x),
\end{multline}
where $g(x)=(g_{j\ell}(x))_{1\leq j,\ell\leq n}$ is the matrix of the Riemannian metric $g$, $g(x)^{- 1}=(g^{j\ell}(x))_{1\leq j,\ell\leq n}$ is its inverse matrix and $|g(x)|=\det(g( x))$.

If the form $F$ is exact on $M$, i.e., $F=d\mathbf A$ for some 1-form $\mathbf A$, then we say that the magnetic system is exact. In this case we will always assume that $(L,h^L)$ is the trivial Hermitian line bundle on $M$ and the Hermitian connection $\nabla^L$ is given by \eqref{e:d-A}.

The operator $H_N$ is a second-order self-adjoint elliptic differential operator on a compact manifold. Therefore, it has a discrete spectrum in $L^2(M,L^N)$ consisting of a countable set of eigenvalues of finite multiplicity. 

The corresponding classical dynamics is described by the magnetic geodesic flow on the cotangent bundle $T^*M$ (see section \ref{s:ham}). The studies of dynamic and variational problems for magnetic geodesic flows, started in the works of S.P. Novikov and I.A. Taimanov  \cite{Novikov1982,NT,T1,T2,T3}, have been actively continued in recent years. We are interested in the relationship of the asymptotic properties of the eigenvalues and eigenfunctions of the magnetic Laplacian $\Delta^{L^N}$ (and more generally the Bochner-Schr\"odinger operator $H_N$) as $N\to \infty$ with the dynamics of the magnetic geodesic flow. These questions were discussed in recent works of the author with I.A. Taimanov \cite{KT19,KT20,KT22}. One of the most important tools in such studies are trace formulas. In papers \cite{KT19,KT22} the Guillemin-Uribe trace formula for the magnetic Laplacian was studied. In this paper, we study the trace formula, which is somewhat different from the Guillemin-Uribe formula. This formula is a natural generalization of the semiclassical Gutzwiller trace formula and reduces to it in the case of an exact magnetic system. Moreover, unlike the papers \cite{KT19,KT22}, where the trace formulas were considered at regular energy levels, in this paper the focus is on the zero energy level, which is a critical energy level.  

The work is organized as follows.
We begin our presentation in Section \ref{s:G} with a brief overview of the semiclassical Gutzwiller trace formula, first in general form and then in the particular case of exact magnetic systems. In Section~\ref{s:trace0} we use the connection of exact magnetic systems with the semiclassical case to define the smoothed spectral density for the Bochner-Schr\"odinger operator associated with the general magnetic system. We then write down the trace formula at zero energy level, which is an asymptotic expansion for the smoothed spectral density, and sketch its proof using the methods of local index theory. We also present several other results concerning the asymptotic behavior of the low-lying eigenvalues of the Bochner-Schr\"odinger operator. Section~\ref{s:trace1} is devoted to approaches to proving the trace formula for the Bochner-Schr\"odinger operator by methods of microlocal analysis. Section \ref{s:examples} gives concrete examples of computing the trace formula at zero energy level for two-dimensional surfaces of constant curvature with constant magnetic fields, as well as for a constant magnetic field on a three-dimensional torus.

\section{Gutzwiller's trace formula}\label{s:G}
A semiclassical trace formula was proposed by Gutzwiller in \cite{Gu} (see also related papers by Balian and Bloch \cite{BB74}). The first rigorous mathematical proofs of this formula were given by Y. Colin de Verdi\`ere \cite{CdV73a, CdV73b}, J. Chazarain \cite{Cha74} and H. Duistermaat and V. Guillemin \cite{Du-Gu75} for the Laplace operator on a compact Riemannian manifold without boundary in the high-energy limit (the trace formula in this case is often called the Duistermaat-Guillemin formula). Here we should also mention the studies of the Selberg formula, begun in \cite{Selberg}. Rigorous proofs of Gutzwiller's formula for general semiclassical operators were first given for regular energy levels by R. Brummeluis and A. Uribe \cite{BU91}, E. Meinrenken \cite{M92,M94}, T. Paul and A. Uribe \cite {PU95} (see also more recent papers \cite{CRR99,PP98,CP99,Dozias,SZ02} and for critical energy levels R. Brummeluis, T. Paul and A. Uribe \cite{BPU95} and D. Khuat-Duy \cite{K-D97} (see also \cite{camus04,camus04a,camus04b,camus08}) We also mention reviews \cite{Uribe00,CdV07} and more recent works \cite{Savale18,SZ21}.

The Gutzwiller formula is an asymptotic (in the semiclassical limit) formula for the so-called smoothed spectral density, which describes the distribution of the eigenvalues of a quantum Hamiltonian in some neighborhood of an energy level $E$, in terms of periodic trajectories of the classical Hamiltonian system on the corresponding level set of the classical Hamiltonian. Here it is important to know whether the energy level $E$ is a regular or a critical value of the classical Hamiltonian. Therefore, we will consider these two cases separately.

\subsection{The case of a regular energy level}
In this section, we present a geometric version of Gutzwiller's formula for differential operators on manifolds and a regular energy level, following \cite{PU95}. Let $\mathcal H^\hbar$ be a differential operator on a compact manifold $M$ depending on the semiclassical parameter $\hbar>0$ of the form
\begin{equation}\label{e:Ahbar}
\mathcal H^\hbar =\sum_{l=0}^m \hbar^l A_l,
\end{equation}
where $A_l$ is a differential operator of order $l$ on $M$, $l=0,\ldots,m$. The semiclassical principal symbol of the operator $\mathcal H^\hbar$ is a smooth function on $T^*M$ given by the formula
\begin{equation}\label{e:HAhbar}
H(x, \xi) =\sum_{l=0}^m \sigma_{A_l}(x,\xi), \quad (x,\xi)\in T^*M,
\end{equation}
where $\sigma_{A_l}$ is the principal symbol of the operator $A_l$. Suppose that there is a constant $c>0$ such that $H(x,\xi)\geq c > 0$ for any $(x,\xi)\in T^*M$. Then suppose that the operator $A_m$ is elliptic in the usual sense. Finally, suppose that the operator $\mathcal H^\hbar$ is formally self-adjoint in the Hilbert space $L^2(M)$ defined by some smooth positive density on $M$.

It is easy to see that an $\hbar$-differential operator $\mathcal H^\hbar$ in the Euclidean space $\mathbb R^d$ of the form
\[
\mathcal H^\hbar =\sum_{j=0}^k \hbar^j a_j(x,\hbar D_x),
\]
where $a_j(x, D_x)$ is a differential operator of order $D$:
\[
a_j(x, D_x)=\sum_{|\alpha|\leq D}a_{j\alpha}(x)D_x^\alpha,\quad j=0,1,\ldots,k,
\]
can be written in the form \eqref{e:Ahbar} with $m=k+D$ and
\[
A_l=\sum_{j+|\alpha|=l}a_{j\alpha}(x)D^\alpha_x,\quad l=0,1,\ldots,m.
\]
Moreover, the standard definition of the principal symbol of a $\hbar$-differential operator is consistent with the formula \eqref{e:HAhbar}:
\[
H(x, \xi) =\sum_{|\alpha|\leq D} a_{0\alpha}(x)\xi^\alpha=a_0(x,\xi).
\]

As an example of an operator of the form \eqref{e:Ahbar}, we can consider the Schr\"odinger operator
\[
\mathcal H^\hbar=-\hbar^2\Delta+V,
\]
where $\Delta$ is the Laplace-Beltrami operator associated with a Riemannian metric on $M$ and $V\in C^\infty(M)$ is a strictly positive potential. 

Under these conditions, the operator $\mathcal H^\hbar$ has discrete spectrum consisting of eigenvalues $\{\lambda_j(\hbar), j=0,1,2,\ldots \}$ of finite multiplicity. Given a function $\varphi\in \mathcal S(\RR)$ and an energy level $E$, we define the smoothed spectral density $Y_{\hbar}(\varphi)$ by the formula
\begin{equation}\label{e:defYh}
Y_{\hbar}(\varphi) =\operatorname{tr} \varphi\left(\frac{\mathcal H^\hbar-E}{\hbar}\right)=\sum_{j=0}^\infty \varphi\left(\frac{\lambda_j(h)-E}{\hbar}\right).
\end{equation}

Denote by $\phi^t$ the Hamiltonian flow with the Hamiltonian $H$ on the phase space $X=T^*M$ with the canonical symplectic structure. The Gutzwiller trace formula under certain conditions on the flow $\phi^t$ gives an expression for the function $Y_{\hbar}(\varphi)$ as an asymptotic (in the semiclassical limit) series, the terms of which are expressed in terms of the geometric characteristics of the restriction of $\phi^ t$ to the level set $X_E:=H^{-1}(E)\subset T^*M$ of $H$. 

Suppose that $E$ is a regular value of the principal symbol $H$, i.e. $dH(x, \xi)\neq 0$ for any $(x,\xi) \in X_E$. Then $X_E$ is a smooth submanifold of  $T^*M$ of dimension $2d-1$.

We say that the flow $\phi^t$ is clean on $X_E$ if the set
\[
\mathcal P=\{(T,(x, \xi))\in \mathbb R\times X_E : \phi^T(x, \xi)=(x, \xi)\}
\]
is a submanifold of $\mathbb R\times X_E$ and for any $(T,(x, \xi))\in \mathcal P$ we have
\[
T_{(T,(x, \xi))}\mathcal P=\{(\tau,v)\in T_{(T,(x, \xi))}(\mathbb R\times X_E): d \phi_{(T,(x, \xi))}(\tau,v)=v\},
\]
where $\phi : \mathbb R \times X_E \to X_E, (t,(x, \xi))\mapsto \phi^t(x, \xi)$.

The action of $S_\gamma$ of a closed curve $\gamma$ in $T^*M$ is given by
\[
S_\gamma=\int_\gamma \eta,
\]
where $\eta$ is the canonical 1-form on $T^*M$:
\[
\eta=\sum_{j=1}^d\xi_jdx^j.
\]

It follows from the cleanness condition of the flow that the action is locally constant on $\mathcal P$. Let us denote its value on a connected component $\mathcal P_\nu$ of $\mathcal P$ by $\alpha_\nu$. Moreover, a canonical smooth density $d\mu_\nu$ is defined on each connected component $\mathcal P_\nu$. 

The subprincipal symbol of the operator $\mathcal H^\hbar$ is a smooth function on $T^*M$ defined by the formula
\[
H_{sub}(x, \xi) =\sum_{l=0}^m \sigma_{A_l,sub}(x,\xi),
\]
where $\sigma_{A_l,sub}$ denotes the subprincipal symbol of the operator $A_l$. We define the function $\beta$ on $\mathcal P$ by the formula
\[
\beta(T,(x, \xi))=\int_0^T H_{sub}(\phi^t(x, \xi))dt.
\]

Denote by $\hat{\varphi}$ the Fourier transform of the function $\varphi\in \mathcal S(\RR)$:
\[
\hat{\varphi}(k)=\int_{\mathbb R} \varphi(\lambda)\exp(- ik\lambda)\,d\lambda,\quad k\in \RR.
\]

The Gutzwiller trace formula is given by the following theorem. 

\begin{thm}[\cite{PU95}, Theorem 5.3]\label{t:Greg}
Assume that $E$ is a regular value of the principal symbol $H$ and the flow $\phi$ is clean on $X_E$. Then for any function $\varphi\in \mathcal S(\RR)$ whose Fourier transform is compactly supported, the smoothed spectral density $Y_\hbar(\varphi)$ admits an asymptotic expansion
\begin{multline}
\label{traceformula}
Y_\hbar(\varphi)=\sum_{j=0}^\infty \varphi\left(\frac{\lambda_j(\hbar)-E}{\hbar}\right)\\ \sim \sum_\nu e^{i\alpha_\nu \hbar^{-1}}\hbar^{-d_\nu}e^{\pi im_\nu/4} \sum_{j=0}^\infty c_{\nu,j}(\varphi)\hbar^{j},\quad \hbar\to 0,
\end{multline}
where:

(1) The sum $\sum_\nu$ is taken over all connected components $\mathcal P_\nu$ of $\mathcal P$ containing at least one point $(T,(x, \xi))$ with $T$ in the support of $\hat \varphi$. This sum is finite.

(2) $d_\nu=(\dim\mathcal P_\nu)/2$.

(3) $m_\nu$ is an integer, the common Maslov index of trajectories in $\mathcal P_\nu$.

(4) The leading coefficient of the $\nu$th term is equal to
\[
c_{\nu,0}(\varphi)=(2\pi)^{-(d_\nu+1)/2}\int_{\mathcal P_\nu}e^{i\beta}\hat{\varphi}(T)d\mu_\nu.
\]
\end{thm}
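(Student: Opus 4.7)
My plan is to follow the standard propagator-plus-trace approach, passing to the Fourier side and analyzing the semiclassical wave kernel. First I would write
\[
Y_\hbar(\varphi)=\frac{1}{2\pi}\int_{\RR}\hat{\varphi}(t)\,e^{-itE/\hbar}\,\Tr\!\left(e^{it\mathcal H^\hbar/\hbar}\right)dt,
\]
using that $\varphi$ is the inverse Fourier transform of $\hat\varphi$ and that the spectral theorem lets us pull $\varphi$ inside the trace. Since $\hat\varphi$ has compact support, the $t$-integration is over a bounded interval, so only finitely many periodic orbits can possibly contribute.

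Next I would construct the propagator $U^\hbar(t)=e^{-it\mathcal H^\hbar/\hbar}$ as a semiclassical Fourier integral operator. On a sufficiently short time interval this is done by WKB ansatz: solve the Hamilton--Jacobi equation for the phase (whose bicharacteristic flow is $\phi^t$) and a transport equation for the amplitude, with subprincipal contributions generating the function $H_{sub}$ along trajectories. Globalizing via the group property $U^\hbar(t_1+t_2)=U^\hbar(t_1)U^\hbar(t_2)$ and the FIO composition calculus, one obtains $U^\hbar(t)$ as a semiclassical FIO whose underlying canonical relation is the graph of $\phi^{-t}$, and whose principal symbol transports half-densities along trajectories picking up the factor $e^{-i\int_0^t H_{sub}\circ\phi^s\,ds}$ together with a Maslov line bundle contribution.

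Taking the trace means restricting the Schwartz kernel of $U^\hbar(t)$ to the diagonal and integrating; combined with the $t$-integral in the Fourier representation, this produces an oscillatory integral on $\RR\times T^*M$ whose phase function, up to the action term $\eta$, has critical set exactly
\[
\mathcal P_{\hat\varphi}=\{(T,(x,\xi))\in\mathrm{supp}\,\hat\varphi\times T^*M:\phi^T(x,\xi)=(x,\xi),\ H(x,\xi)=E\}.
\]
The localization to $H^{-1}(E)$ is achieved by inserting a microlocal cutoff supported near $X_E$ (the contributions away from $X_E$ are $O(\hbar^\infty)$ by nonstationary phase in the $t$-variable, using $dH\neq 0$ on $X_E$). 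The cleanness assumption identifies $\mathcal P_{\hat\varphi}$ as a disjoint union of smooth submanifolds $\mathcal P_\nu$ with transverse normal Hessian; I would then apply the method of stationary phase for clean critical manifolds (as in H\"ormander, Duistermaat--Guillemin). This produces the asymptotic series with prefactor $\hbar^{-d_\nu}$, where $d_\nu=(\dim\mathcal P_\nu)/2$ arises from half the codimension formula, the oscillating factor $e^{i\alpha_\nu/\hbar}$ from the locally constant action $\int_\gamma\eta$, the factor $e^{\pi im_\nu/4}$ from the signature of the transverse Hessian assembled into the Maslov index of the orbit family, and the integral formula for $c_{\nu,0}(\varphi)$ from the transported principal symbol integrated against the canonical density $d\mu_\nu$ induced by the symplectic structure on $\mathcal P_\nu$.

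The main obstacle I anticipate is the clean stationary phase step, specifically the rigorous identification of the measure $d\mu_\nu$ and the Maslov index $m_\nu$ in terms of invariants intrinsic to $\mathcal P_\nu$; this requires a careful tracking of half-densities and of the Maslov line bundle through the composition of FIOs along the orbit, as well as a verification that all local contributions glue into globally defined invariants independent of auxiliary choices. A secondary technical point is the construction of $U^\hbar(t)$ as a globally defined semiclassical FIO on times covering $\mathrm{supp}\,\hat\varphi$; this is handled by patching local WKB solutions through the group law, but verifying symbol composition carefully is needed to get the subprincipal phase $\beta$ correctly in the leading coefficient.
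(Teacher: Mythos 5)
Your plan — Fourier-dualize $Y_\hbar(\varphi)$, build the semiclassical propagator $e^{-it\mathcal H^\hbar/\hbar}$ by WKB, and apply clean stationary phase — is a standard and viable route, essentially that of Meinrenken \cite{M92,M94} or Combescure--Ralston--Robert \cite{CRR99}. It is, however, genuinely different from the route of Paul and Uribe \cite{PU95} that the paper follows for this theorem. There, and in the related Section~\ref{s:inv}, one does \emph{not} work with semiclassical FIOs: following Colin de Verdi\`ere's idea, the semiclassical parameter is realized as the dual variable to an auxiliary circle $S^1$ (so $1/\hbar$ appears as an eigenvalue of $D_\theta$), and the problem becomes a \emph{homogeneous} high-energy spectral problem for a pair of commuting classical pseudodifferential operators, to which the Duistermaat--Guillemin wave-trace machinery for $e^{it\sqrt{P}}$ is applied. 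One thing the reduction approach buys is that it sidesteps the symbol-class issue your proposal elides: the operator $\mathcal H^\hbar$ in \eqref{e:Ahbar} has an $\hbar$-symbol growing like $\langle\xi\rangle^m$, so $e^{-it\mathcal H^\hbar/\hbar}$ is not a semiclassical FIO uniformly in $\xi$, and indeed $\Tr\bigl(e^{-it\mathcal H^\hbar/\hbar}\bigr)$ is not even defined as it stands; one must first insert a spectral cutoff $\chi(\mathcal H^\hbar)$ with $\chi\in C^\infty_c$ equal to $1$ near $E$ (your appeal to nonstationary phase in $t$ handles the microlocal localization, but not the convergence of the trace). The reduction approach avoids this by placing everything in the classical homogeneous calculus from the outset, where the ellipticity of $A_m$ enters directly. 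Beyond this point, your description of the clean stationary phase step — the prefactor $\hbar^{-d_\nu}$, the action phase, the Maslov factor coming from the signature, and the leading coefficient as the transported principal symbol integrated against $d\mu_\nu$ — is correct in spirit, and your assessment that the Maslov/half-density bookkeeping is the delicate part matches what the cited works show.
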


For the connected component $\{0\}\times X_E$, $d_\nu=2d-1$ and
\[
c_{\nu,0}(\varphi)=(2\pi)^{-d}\hat{\varphi}(0){\rm Vol}(X_E),
\]
where ${\rm Vol}(X_E)$ is the Liouville volume of $X_E$.

For a connected component of the form $\gamma \times \{T\}$, where $\gamma\subset X_E$ is a nondegenerate periodic trajectory of the flow, $d_\nu = 1$ and
\begin{equation}\label{e:c0-nondeg}
c_{\nu,0}(\varphi)=\frac{T^\#}{2\pi |\det(I-P_\gamma)|^{1/2}}\hat\varphi(T)
\end{equation}
where $P_\gamma$ and $T^\#_\gamma$ are the linearized Poincar\'e map and the primitive period of the trajectory $\gamma$, respectively.

The proof of Theorem \ref{t:Greg} uses the method described in Section \ref{s:inv} below. It is based on a reduction of the semiclassical spectral problem in question to some asymptotic spectral problem for joint eigenvalues of a pair of commuting pseudodifferential operators in the high-energy limit. Then the methods of the theory of Fourier integral operators and microlocal analysis developed for the proof of the Duistermaat-Guillemin trace formula are used. The cleanness condition of the flow is necessary in order to apply the theorem on the composition of Fourier integral operators, which, in its turn, essentially relies on the calculation of the asymptotics of oscillatory integrals with nondegenerate phase functions by the stationary phase method.

\subsection{The case of a critical energy level}\label{s:Gcrit}
The case where $E$ is the critical energy level was first studied in \cite{BPU95}. In this paper, the authors considered the operator $\mathcal H^\hbar$ depending on the semiclassical parameter $\hbar>0$ given by the formula \eqref{e:Ahbar} under the conditions given in the previous section. They assumed that the set
\[
\Theta = \{(x,\xi) \in T^*M : dH(x, \xi) = 0\}
\]
of critical points of the principal symbol $H$ of the operator $\mathcal H^\hbar$ is a smooth compact manifold and $H$ has a non-degenerate normal Hessian on $\Theta$, that is, for all ${(x,\xi)}\in \Theta$ the bilinear form $Q(H)_{(x,\xi)}$ on the normal space $N_{(x,\xi)}\Theta : =T_{(x,\xi)}(T^*M)/T_{(x,\xi)}\Theta $ to $\Theta$ at $(x,\xi)$ defined by the second differential $d^2_{(x,\xi)}H$ of $H$ at $(x,\xi)$ is non-degenerate. Moreover, the multiplicities of the eigenvalues of the normal Hessian $Q(H)_{(x,\xi)}$ are locally constant on $\Theta$. Without loss of generality, we may assume that $\Theta$ is connected and is contained in $X_E$ for some $E$.

Recall that $\phi^t: T^*M\to T^*M$ denotes the Hamiltonian flow with Hamiltonian $H$. It is easy to see that each point $(x,\xi)\in \Theta$ is a fixed point of the flow $\phi^t$. The differentials of the maps $\phi^t$ at ${(x,\xi)}\in \Theta$ define the linearized flow $d\phi_{t,(x,\xi)} : N_{(x,\xi) }\Theta \to N_{(x,\xi)}\Theta$. A number $T$ is a period of the flow $d\phi_{t,(x,\xi)}$ if there exists a vector $u\in T_{(x,\xi)}(T^*M)\setminus T_{ (x,\xi)}\Theta$ such that $d\phi_{t,(x,\xi)}u=u$.

Denote $q=\operatorname{codim} \Theta$, and $\nu$ is the number of negative eigenvalues of the Hessian $Q(H)$ on $\Theta$. As will be shown below, the case of the Bochner-Schr\"odinger operator corresponds to the case $q=d$ and $\nu=0$.

\begin{thm}[\cite{BPU95}, Theorem 1.1]\label{t:GBPU}
Let $\varphi\in \mathcal S(\RR)$ be any function such that $0$ is the only period of the linearized flow in the support of its Fourier transform.

(1) If $\nu\geq 1$, $q-\nu\geq 1$ and both of these numbers are odd, then the asymptotic expansion holds true
\begin{multline}\label{e:BPU1}
Y_\hbar(\varphi)=\sum_{j=0}^\infty \varphi\left(\frac{\lambda_j(\hbar)-E}{\hbar}\right)\\ \sim \hbar^{ -(d-1)}\left[\sum_{j=0}^\infty c_{j,0}\hbar^j+\sum_{j=\frac{q}{2}-1}^\infty c_ {j,1}\hbar^j[\log(1/h)]\right], \quad \hbar\to 0.
\end{multline}
(2) If $\nu\geq 1$, $q-\nu\geq 1$ and one of them is even or if the form $Q(H)$ is positive definite ($\nu=0$), then the asymptotic expansion holds true
\begin{equation}\label{e:BPU2}
Y_\hbar(\varphi)=\sum_{j=0}^\infty \varphi\left(\frac{\lambda_j(\hbar)-E}{\hbar}\right)\sim \hbar^{-( d-1)}\sum_{j=0}^\infty c_{j}\hbar^{j/2}, \quad \hbar\to 0.
\end{equation}
\end{thm}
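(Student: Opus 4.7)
The plan is to follow the strategy of Brummelhuis, Paul, and Uribe, combining a Fourier integral operator parametrix for the quantum propagator with a normal-form reduction near the critical manifold $\Theta$. The starting point is the Fourier inversion formula
\[
Y_\hbar(\varphi)=\frac{1}{2\pi}\int_\RR \hat\varphi(t)\,e^{iEt/\hbar}\,\operatorname{tr}\left(e^{-it\mathcal H^\hbar/\hbar}\right)dt,
\]
which reduces the problem to extracting an $\hbar$-asymptotic expansion of the trace of the Schr\"odinger propagator, microlocalized in $t$.

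First I would construct a semiclassical WKB parametrix for $e^{-it\mathcal H^\hbar/\hbar}$ valid for $t$ in a compact neighborhood of $\supp\hat\varphi$, realizing the propagator modulo $O(\hbar^\infty)$ as a semiclassical Fourier integral operator whose underlying canonical relation is the graph of $\phi^t$ and whose amplitude is determined by the transport equations for $H$ and its lower-order symbols. Taking the trace produces an oscillatory integral over $T^*M\times\RR$ whose total phase function $\Psi(t,x,\xi)/\hbar$ has critical set equal, for each $t\in\supp\hat\varphi$, to the fixed-point set of $\phi^t$ inside $X_E$. Under the hypothesis that $0$ is the only period of the linearized flow in $\supp\hat\varphi$, I would verify that this critical set reduces to $(\supp\hat\varphi\times\Theta)\cup(\{0\}\times X_E)$.

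Using a microlocal partition of unity, I would split the integral into pieces supported near and away from $\Theta$ and analyze them separately. Away from $\Theta$, the energy level $X_E$ is regular and the analysis underlying Theorem \ref{t:Greg} applies, yielding a smooth asymptotic expansion in integer powers of $\hbar$ starting at order $\hbar^{-(d-1)}$. Near $\Theta$, I would apply the Morse lemma with parameters to choose local coordinates in which $H-E=\frac{1}{2}Q(z)$ exactly, where $z$ are normal coordinates to $\Theta$ and $Q$ is the normal Hessian of signature $(q-\nu,\nu)$. The trace integral then reduces to a model oscillatory integral whose inner Fresnel integral in $z$ can be evaluated in closed form, producing a factor proportional to $(\hbar/|t|)^{q/2}$ together with an explicit signature-dependent phase involving $\operatorname{sgn} t$.

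The main obstacle, and the source of the dichotomy between cases (1) and (2), lies in the remaining $t$-integral against $\hat\varphi$: it amounts to pairing homogeneous distributions of the form $t_+^{-q/2-j}e^{i\pi(q-\nu)/4}+t_-^{-q/2-j}e^{-i\pi\nu/4}$ with Taylor expansions of $\hat\varphi$ and of the amplitude at $t=0$. When both $\nu$ and $q-\nu$ are odd, the relevant homogeneous distributions are not canonically defined and require a Hadamard finite-part regularization whose Fourier transform carries a logarithm; this produces the $\hbar^j\log(1/\hbar)$ contributions of \eqref{e:BPU1} starting at order $\hbar^{q/2-1}\log(1/\hbar)$. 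In all remaining cases, including the positive-definite case $\nu=0$ relevant to the Bochner-Schr\"odinger operator, these distributions combine into well-defined tempered distributions whose pairings with $\hat\varphi$ yield only half-integer powers of $\hbar$, giving the expansion \eqref{e:BPU2}. Assembling the full series requires combining the Fresnel computation with the transport-equation expansion of the WKB amplitudes and with Maslov phase bookkeeping; this combinatorial tracking of terms, rather than any single step, is the most delicate part of the argument.
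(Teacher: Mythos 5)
Your sketch is a legitimate outline, but it takes a genuinely different technical route from the one the paper attributes to \cite{BPU95}. You work directly in the semiclassical calculus: Fourier-invert $Y_\hbar(\varphi)$ to a trace of the propagator $e^{-it\mathcal H^\hbar/\hbar}$, build an $\hbar$-Fourier integral operator parametrix, and run a degenerate stationary-phase analysis, decomposing near and away from $\Theta$, with a Morse-lemma normal form and Fresnel computation in the normal directions and a distributional regularization of the residual $t$-integral near $t=0$. The paper instead describes the strategy of \cite{BPU95}, which builds on \cite{PU95} and the reduction of Section~\ref{s:inv}: introduce an auxiliary circle variable, realize $\hbar^{-1}$ as an eigenvalue of $D_\theta$, and thereby convert the semiclassical spectral problem into a high-frequency joint-spectral problem for a pair of commuting homogeneous pseudodifferential operators, after which the Duistermaat--Guillemin Fourier integral operator machinery applies; the new difficulty, as the paper emphasizes, is that the phase degenerates along $\Theta$. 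Both routes share the same analytic core --- normal Hessian of signature $(q-\nu,\nu)$, a Fresnel phase depending on the sign of $t$, and the regularization of $t_\pm^{-s}$, whose Fourier transform acquires a logarithm exactly when $s$ is a positive integer, which is the mechanism producing the dichotomy between \eqref{e:BPU1} and \eqref{e:BPU2} --- and your account of that mechanism is correct. Your direct-propagator route is closer in spirit to Khuat-Duy \cite{K-D97}, which the paper also discusses; the circle-bundle lift of \cite{BPU95} buys cleaner global FIO bookkeeping and sidesteps the caustic problems that a direct WKB parametrix for $e^{-it\mathcal H^\hbar/\hbar}$ faces on a general manifold, a point your sketch does not address. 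One further detail worth making explicit: your identification of the critical set as $(\supp\hat\varphi\times\Theta)\cup(\{0\}\times X_E)$ tacitly assumes that no nonconstant periodic orbit of $\phi^t$ on $X_E\setminus\Theta$ has period in $\supp\hat\varphi$; the stated hypothesis on the periods of the linearized flow at $\Theta$ does not by itself rule such orbits out.
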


In \cite{BPU95}, formulas for the leading coefficients of the expansions \eqref{e:BPU1} and \eqref{e:BPU2} have been also obtained. In particular, the authors have shown (see \cite[Section 3.4]{BPU95}) that if the normal Hessian is positive definite, then $\Theta$ contributes to the coefficient $c_j$ of \eqref{e:BPU2}, starting from the degree $\hbar^{-d+q/2}$ (i.e. with $j=q-2$).

The proof of Theorem \ref{t:GBPU} uses the methods of the proof of  Theorem \ref{t:Greg} with the only difference that in this case the cleanness condition of the flow is not satisfied, which leads to the necessity to study the asymptotics of oscillatory integrals with degenerate phase functions.

The proof of Gutzwiller's formula for a critical energy level, which is valid for an arbitrary function $\varphi\in \mathcal S(\RR)$, whose Fourier transform is compactly supported, and, in particular, the computation of the contributions of nonzero periods of the linearized flow, were given in \cite{ K-D97} for the Schr\"odinger operator
\[
\mathcal H^\hbar=-\frac{\hbar^2}{2}\Delta+V(x)
\]
in the Euclidean space $\mathbb R^d$ $(d\geq 1)$ provided that $$V\in C^\infty(\mathbb R^d)$$ and $\lim_{|x|\to +\infty} V(x)=+\infty$. It is well known that under such conditions the spectrum of $\mathcal H^\hbar$ is discrete. In this case, the set $\Theta$ of critical points of the principal symbol $H(x,\xi)=\frac{|\xi|^2}{2}+V(x)$ has the form
\[
\Theta = \{(x,0) \in \mathbb R^{2d} : dV(x) = 0\}.
\]
As above, suppose that $\Theta$ is a smooth compact manifold, for any $(x,0)\in \Theta$ the normal Hessian $Q(H)_{(x,0)}$ on $N_{(x,0)}\Theta$ is non-degenerate, the multiplicities of the eigenvalues of the normal Hessian $Q(H)_{(x,0)}$ are locally constant for $(x,0)\in \Theta$, $\Theta$ is connected and is contained in some $X_E$. Note that in this case $\Theta$ is an isotropic submanifold of $\mathbb R^{2d}=T^*\mathbb R^{d}$ endowed with the canonical symplectic form.

Suppose that the Hamiltonian flow $\phi^t$ in $\mathbb R^{2d}$ with Hamiltonian $H$ is clean on $X_E\setminus \Theta$. Denote by
\[
(\alpha_1(x)^2,\ldots, \alpha_r(x)^2, -\alpha_{r+1}(x)^2, \ldots, -\alpha_{r+\nu}(x)^2 ,0,\ldots,0), \quad \alpha_i(x)>0
\]
the eigenvalues of the second differential $d^2V(x)$ for $(x,0)\in \Theta$. Then $\dim \Theta = d-r-\nu$. As above, we set $q:=\operatorname{codim} \Theta = d+r+\nu$.

In \cite[Theorem 1.3]{K-D97}, analogues of the asymptotic expansions \eqref{e:BPU1} and \eqref{e:BPU2} are proved for all functions $\varphi$ whose Fourier transform is compactly supported, and the coefficients of these expansions are computed.

Let us consider in more detail the case $\nu=0$, which is the most important for us. In this case, it is proved in \cite[Theorem 1.3]{K-D97} that the asymptotic expansion \eqref{e:BPU2} remains valid. Moreover, it does not contain half-integer powers of $\hbar$, that is, it has the form
\begin{equation}\label{e:HD2bis}
Y_\hbar(\varphi)=\sum_{j=0}^\infty \varphi\left(\frac{\lambda_j(\hbar)-E}{\hbar}\right)\sim \hbar^{q/ 2-d}\sum_{j=0}^\infty c_{j}\hbar^{j}, \quad \hbar\to 0.
\end{equation}

For arbitrary $\ell\in \mathbb Z_+$, $m\in \mathbb N$ and $\alpha_j>0, j=1,\ldots, m$, define a distibution
\[
\frac{1}{(t+i0)^\ell \prod_{j=1}^m\sin \alpha_j(t+i0)}\in \mathcal S^\prime(\mathbb R)
\] 
by the formula
\begin{multline*}
\left\langle \frac{1}{(t+i0)^\ell\Pi_{j=1}^m\sin \alpha_j(t+i0)}, \psi \right\rangle \\ =\lim_{\varepsilon \to 0+}\int_{\mathbb R} \frac{\psi(t)}{(t+i\varepsilon)^\ell \prod_{j=1}^m\sin \alpha_j(t+i\varepsilon)}\,dt, \quad \psi\in \mathcal S(\mathbb R). 
\end{multline*}
The formula for the leading coefficient $c_0$ of the asymptotic expansion \eqref{e:HD2bis} has the form 
\begin{multline}\label{e:HDc0}
c_0=\frac{2^{d-q}e^{-3\pi (q/4)i}}{(2\pi)^{d-q/2}} \\ \times \int_{\Theta} \frac{1}{2\pi} \left\langle \frac{1}{(t+i0)^{d-q/2}\prod_{j=1}^{q-d}\sin ((\alpha_j(x)/2)(t+i0))}, \hat\varphi(t)\right\rangle dx. 
\end{multline}

For any $m\in \mathbb N$ and $c_j>0, j=1,\ldots, m$, the following formulas hold true (see. for instance, \cite[Lemma 3.3]{K-D97}):
\begin{equation}\label{e:KD1}
\frac{1}{2\pi} \left\langle \frac{1}{\prod_{j=1}^m\sin c_j(t+i0)}, \hat \varphi(t)\right\rangle=(-2i)^m \sum_{\mathbf k\in \mathbb Z_+^m}\varphi\left(\sum_{j=1}^m(2k_j+1)c_j\right)
\end{equation}
and for $\ell>0$
\begin{multline}\label{e:HDformula}
\frac{1}{2\pi} \left\langle\frac{1}{(t+i0)^{\ell}\prod_{j=1}^m\sin c_j(t+i0)}, \hat\varphi \right\rangle\\ =\frac{2^m e^{3\pi((\ell+m)/2) i}}{\Gamma(\ell)} \sum_{\mathbf k\in \mathbb Z_+^m}\int_{\mathbb R} \left(\tau-\sum_{j=1}^m(2k_j+1)c_j\right)_+^{\ell-1}\varphi(\tau)\,d\tau,
\end{multline}
where $\left(\tau-\beta\right)_+^{\ell-1}$ is the function, equal to $(\max(0,\tau-\beta)^{\ell-1})$. 

For $\ell\in \frac 12\mathbb N$, passing to polar coordinates, we get 
\[
\int_{\mathbb R^{2\ell}} \varphi \left(|\xi|^2+\sum_{j=1}^m(2k_j+1)c_j\right) \,d\xi =\frac{\pi^\ell}{\Gamma(\ell)} \int_{\mathbb R} \left(\tau-\sum_{j=1}^m(2k_j+1)c_j\right)_+^{\ell-1}\varphi(\tau)\,d\tau, 
\]
which allows us to rewrite \eqref{e:HDformula} as
\begin{multline}\label{e:HDformula-bis}
\frac{1}{2\pi} \left\langle\frac{1}{(t+i0)^{\ell}\prod_{j=1}^m\sin c_j(t+i0)}, \hat\varphi \right\rangle\\ =\frac{2^m e^{3\pi((\ell+m)/2) i}}{\pi^\ell} \sum_{\mathbf k\in \mathbb Z_+^m}\int_{\mathbb R^{2\ell}} \varphi \left(|\xi|^2+\sum_{j=1}^m(2k_j+1)c_j\right) \,d\xi. 
\end{multline}

Using \eqref{e:HDformula} and \eqref{e:HDformula-bis}, the formula \eqref{e:HDc0} is rewritten as 
\begin{multline}\label{e:HDc0-bis}
c_0\\ =\frac{1}{(2\pi)^{d-q/2}}\frac{1}{\Gamma(d-q/2)} \int_{\mathbb R} \left[ \int_{\Theta} \sum_{\mathbf k\in \mathbb Z_+^{q-d}} (\tau-\beta_{\mathbf k}(x))_+^{d-q/2-1}dx\right] \varphi(\tau)\,d\tau\\
=\frac{1}{(2\pi)^{2d-q}} \sum_{\mathbf k\in \mathbb Z_+^{q-d}} \int_\Theta \int_{{\mathbb R^{2d-q}}} \varphi \left(\frac{1}{2}|\xi|^2+\beta_\mathbf k(x)\right) dx\, d\xi,
\end{multline}
where
\[
\beta_{\mathbf k}(x)=\sum_{j=1}^{q-d}\left(k_j+\frac 12\right)\alpha_j(x), \quad (x,0)\in\Theta, \quad \mathbf k\in \mathbb Z_+^{q-d}.
\]

Using the last formula, we can rewrite \eqref{e:HDc0} as the classical Weyl formula with a suitably chosen operator-valued symbol. The problems of constructing analogues of the Weyl formula for various classes of degenerate operators using general theorems on the asymptotic behavior of the spectrum of pseudodifferential operators with operator-valued symbols were discussed in \cite{Lev88}. For semiclassical spectral problems, these issues are closely related to adiabatic limits and the Born-Oppenheimer approximation (See, for example, \cite{Belov-D-T,Teufel,Raymond:book} and references therein).

\subsection{Exact magnetic systems}\label{s:exact}
In this section, we return to magnetic systems and the Bochner-Schr\"odinger operator \eqref{e:BochnerHN} and consider the case of an exact magnetic system. Thus, suppose that the form $F$ is exact, $F=d\mathbf A$ with some real 1-form $\mathbf A$, the Hermitian line bundle $(L,h^L)$ is trivial, and the Hermitian connection $\nabla^L$ is written as $\nabla^L=d-i \mathbf A$. In this case, the operator $H_N$ is related to the semiclassical magnetic Schr\"odinger operator $\mathcal H^\hbar$ by the formula \eqref{e:Bochner-Hh}. Rewriting the formula \eqref{e:defYh} for the smoothed spectral density of $\mathcal H^\hbar$ at the energy level $E_0\geq 0$ in terms of $H_N$, we get
\begin{equation}
\label{e:defYp}
Y_N(\varphi)=\operatorname{tr} \varphi\left(\frac{\mathcal H^\hbar-E_0}{\hbar}\right)=\operatorname{tr} \varphi\left(\frac{1}{N}\Delta^{L^N}+V-E_0N\right). 
\end{equation}
We will use this formula to define the smoothed spectral density of the operator $H_N$ associated with an arbitrary magnetic system. 

It is not difficult to compute the prinicpal and subprincipal symbols of the operator $\mathcal H^\hbar$ (see, for example, \cite[Lemma A.1]{Morin19}): 
\begin{equation}
\label{e:H-Hsub}
H(x, \xi) = |\xi-\mathbf A(x)|_{g^{-1}}^2, \quad H_{sub}(x, \xi) = V(x), \quad (x,\xi)\in T^*M. 
\end{equation}
In local coordinates, we get
\[
H(x, \xi) = \sum_{k,\ell=1}^d g^{k\ell}(x)(\xi_k - A_k(x))(\xi_\ell - A_\ell(x)), \quad (x,\xi)\in \mathbb R^{2d}.
\]
Since
\[
\frac{\partial H}{\partial \xi_\ell}(x,\xi)=2\sum_{k,\ell=1}^d g^{k\ell}(x)(\xi_k - A_k(x)),
\]
any $E_0>0$ is a regular value of the principal symbol $H$, and the Gutzwiller formula given in Theorem \ref{t:Greg} is applicable in this case and describes a complete asymptotic expansion of $Y_N(\varphi)$ as $N \to \infty$.

The value $E_0=0$ is a critical value of $H$. We will discuss the trace formulas at this energy level later in the context of general magnetic systems. In the rest of this section, we give some facts about the geometry of the corresponding set of critical points of $H$. 

It is easy to see that the set of critical points of $H$ on the zero level set $X_0=H^{-1}(0)$ (which is often called the characteristic set of $\mathcal H^\hbar$) coincides with the entire $X_0 $. The set $X_0$ is a $d$-dimensional submanifold of $T^*M$:
\begin{equation}
\label{e:Sigma-exact}
X_0=\{(x,\xi)\in T^*M : \xi=\mathbf A(x)\}.
\end{equation}
It is identified with $M$ by the map
\[
j : x \in M \mapsto (x, \mathbf A(x)) \in X_0,
\]
and its inverse is the restriction of the projection $\pi : T^*M\to M$ to $X_0$.

It can be shown (see, for example, \cite[Lemma 2.1]{Morin21a}) that the restriction of the canonical symplectic form $\omega=\sum_{j=1}^d d\xi_j\wedge dx_j$ to $X_0$ is
\[
\omega_{X_0}=\pi^*F.
\]
Thus, in contrast to the case of the Schr\"odinger operator considered in \ref{s:Gcrit}, the submanifold $X_0$ is not isotropic. Its properties are determined by the properties of the form $F$. If $F$ is non-degenerate, then the manifold $(X_0,\omega_{X_0})$ is symplectic. If $F$ has constant rank, then $(X_0, \omega_{X_0})$ is a presymplectic manifold.

In \cite[Sections 2.1 and 2.2]{Morin21a}, the second differential $d^2H$ on $X_0$ is computed. In local coordinates $(x_1,\ldots,x_d)$ we write $$\mathbf A(x)=\sum_{k=1}^d A_k(x)dx_k$$ and introduce the notation
\[
(\nabla\mathbf A \cdot Q)_k=\sum_{\ell=1}^d \frac{\partial A_k}{\partial x_\ell}(x)Q_\ell, \quad ((\nabla\mathbf A)^T \cdot Q)_k=\sum_{\ell=1}^d \frac{\partial A_\ell}{\partial x_k}(x)Q_\ell.
\]
The tangent space $T_{j(x)}X_0$ to $X_0$ at $j(x)$ is given by
\[
T_{j(x)}X_0=\{(Q, P)\in T_{j(x)}(T^*M)\cong \mathbb R^{2d} : P=\nabla\mathbf A \cdot Q\},
\]
The skew-orthogonal complement of $T_{j(x)}X_0^\bot$ to $T_{j(x)}X_0$ has the form
\[
T_{j(x)}X_0^\bot=\{(Q, P)\in T_{j(x)}(T^*M)\cong \mathbb R^{2d} : P=(\nabla\mathbf A)^T \cdot Q\}.
\]
In particular, it is easy to see that
\[
T_{j(x)}X_0 \cap T_{j(x)}X_0^\bot= \operatorname{Ker}(\pi^*F).
\]

The second differential $d^2_{j(x)}H$ at $j(x)=(x, \mathbf A(x)) \in X_0$ is a quadratic form on $T_{j(x)}(T ^*M)\cong \mathbb R^{2d}$ given by the formula
\[
d^2_{j(x)}H(Q,P)= 2 \sum_{k,\ell=1}^d g^{k\ell}(x)(P_k - (\nabla\mathbf A \cdot Q )_k) (P_\ell - (\nabla\mathbf A \cdot Q)_\ell).
\]
Thus, the form $d^2_{j(x)}H$ defines a bilinear form $Q(H)_{j(x)}$ on $T_{j(x)}(T^*M)/T_{ j(x)}X_0$ (the normal Hessian), which is positive definite.

Let $J_x : T_xM\to T_xM$ be a skew-symmetric operator such that
\begin{equation}\label{e:defJ}
F_x(u,v)=g(J_xu,v), \quad u,v\in T_xM.
\end{equation}
Assume that the rank of the form $F_x$ is $2n$ and denote by $\pm ia_k(x)$, $k=1,\ldots,n$ $(a_k(x)>0)$ the non-zero eigenvalues of the operator $ J_x$. 

It is proved in \cite[Sections 2.1 and 2.2]{Morin21a} that there exists a linearly independent system of vectors $(\{f_k\}_{k=1}^n, \{f^\prime_{k^\prime}\}_{k^\prime=1}^n, \{g_\ell\}_{\ell=1}^{d-2n})$ in $T_{j(x)}(T^*M)\setminus T_{j(x)}X_0$ such that 
\[
\omega(f_k,f_{k^\prime})=\omega(f^\prime_k,f^\prime_{k^\prime})=0, \quad \omega(f_k,f^\prime_{k^\prime})=\delta_{k{k^\prime}}, \quad k,{k^\prime}=1,\ldots,n,
\]
\[
\omega(f_k,g_\ell)=\omega(f^\prime_k,g_\ell)= 0,\quad k=1,\ldots,n,\quad \ell=1,\ldots,d-2n, 
\]
\[
\omega(g_\ell,g_{\ell^\prime})=0,\quad \ell,{\ell^\prime}=1,\ldots,d-2n.
\]
and
\begin{multline*}
d^2_{j(x)}H(f_k,f_{k^\prime})=d^2_{j(x)}H(f^\prime_k,f^\prime_{k^\prime})=2a_k(x)\delta_{kk^\prime}, \quad d^2_{j(x)}H(f_k,f^\prime_{k^\prime})=0, \\ k,{k^\prime}=1,\ldots,n,
\end{multline*}
\[
d^2_{j(x)}H(f_k,g_\ell)=d^2_{j(x)}H(f^\prime_k,g_\ell)= 0,\quad k=1,\ldots,n,\quad \ell=1,\ldots,d-2n, 
\]
\[
d^2_{j(x)}H(g_\ell,g_{\ell^\prime})=0,\quad \ell,{\ell^\prime}=1,\ldots,d-2n.
\]
This gives a complete description of the normal Hessian in this case

\section{Trace formula at zero energy level}\label{s:trace0}

In this section, we discuss the trace formulas for the Bochner-Schr\"dinger operator \eqref{e:BochnerHN} associated with an arbitrary magnetic system described in Introduction. We will use the notation introduced in Introduction.

\subsection{Smoothed spectral density}
We use the formula \eqref{e:defYp} to define the smoothed spectral density for the Bochner-Schr\"odinger operator $H_N$ associated with an arbitrary magnetic system:
\begin{equation}
\label{e:defYp2}
Y_N(\varphi)=\operatorname{tr} \varphi\left(\frac{1}{N}\Delta^{L^N}+V-E_0N\right).
\end{equation}
If we denote by $\nu_{N,j}, j=0,1,2, \ldots,$ the eigenvalues of $H_N$  taking into account the multiplicities, then this formula takes the form
\[
Y_N(\varphi) =\sum_{j=0}^\infty \varphi\left(\frac{1}{N}\nu_{N,j}-E_0N\right).
\]

As far as we know, the question of proving the corresponding trace formula in the case of $E_0>0$ is still open (see, nevertheless, Remark \ref{r:E0}). In \cite{Gu-Uribe89} Guillemin and Uribe considered another version of the smoothed spectral density and proved the trace formula for it. A review of the main concepts and results related to the Guillemin-Uribe trace formula and some concrete examples of its computation is given in \cite{KT19} (see also \cite{KT22}). In this paper, we will pay more attention to the case of zero energy $E_0=0$ (some information for the case $E_0>0$ is given in Section \ref{s:GU}).

\subsection{Trace formula} 
For the energy level $E_0=0$, the smoothed spectral density $Y_N(\varphi)$ of the operator $H_N$ given by \eqref{e:defYp2} takes the form
\begin{equation}
\label{e:defYp2-cr}
Y_N(\varphi)=\operatorname{tr} \varphi\left(\frac{1}{N}\Delta^{L^N}+V\right)=\sum_{j=0}^\infty \varphi \left(\frac{1}{N}\nu_{N,j}\right).
\end{equation}
In \cite{Bochner-trace} the author proved the corresponding trace formula.

\begin{thm}\label{t:trace} 
There is a sequence of distributions $f_r \in \mathcal D^\prime(\mathbb R), r\geq 0$ such that for any $\varphi\in C^\infty_c(\mathbb R)$ the sequence $Y_N(\varphi)$ given by \eqref{e:defYp2-cr} admits an asymptotic expansion
\begin{equation} \label{e:main-tr}
Y_N(\varphi) \sim N^{\frac d2} \sum_{r=0}^{\infty}f_{r}(\varphi) N^{-\frac{r}{2}}, \quad N\to \infty.
\end{equation}
\end{thm}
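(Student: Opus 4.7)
The plan is to use the local index theory machinery developed for Bergman kernels of high tensor powers, adapted to the spectral measure $\varphi(H_N/N)$. The central mechanism is a Bismut-type rescaling at each point $x_0 \in M$ that identifies the leading local model of the operator $H_N/N$ with a Landau Hamiltonian on $T_{x_0}M$ driven by the constant magnetic field $F_{x_0}$. The trace asymptotic expansion \eqref{e:main-tr} will follow by integrating a pointwise asymptotic expansion of the on-diagonal Schwartz kernel over $M$.

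First I would represent the trace via Fourier inversion as
\[
Y_N(\varphi) = \int_M \operatorname{tr}_{L^N_x} K_N(x,x)\, dv_g(x), \qquad K_N(x,y) = \frac{1}{2\pi} \int_{\mathbb{R}} \hat\varphi(t)\, \bigl(e^{-itH_N/N}\bigr)(x,y)\, dt,
\]
and establish off-diagonal decay of the form $|K_N(x,y)| \leq C N^{d/2} \exp(-c\sqrt{N}\, d(x,y))$ via finite-propagation-speed arguments applied to $\cos(t\sqrt{H_N/N})$ together with the uniform lower bounds on the Bochner Laplacian on high tensor powers. This localizes the analysis to a neighborhood of the diagonal of size $O(N^{-1/2})$.

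Next, fix $x_0 \in M$, take Riemannian normal coordinates $Z \in T_{x_0}M$ centered at $x_0$, and trivialize $L$ around $x_0$ by a synchronous (radial) gauge so that the connection 1-form reads $\mathbf{A}(Z) = \tfrac{1}{2}\sum_{jk} F_{jk}(x_0) Z^j dZ^k + O(|Z|^2)$. Performing the Bismut rescaling $Z = u/\sqrt{N}$ and setting $s = 1/\sqrt{N}$, the conjugated operator $\mathcal{L}_s$ on $T_{x_0}M$ admits a formal expansion
\[
\mathcal{L}_s = \mathcal{L}_0 + s\,\mathcal{L}_1 + s^2\,\mathcal{L}_2 + \cdots,
\]
where $\mathcal{L}_0$ is the Landau Hamiltonian on $T_{x_0}M$ associated with the constant magnetic field $F_{x_0}$ plus the constant potential $V(x_0)$, and each $\mathcal{L}_j$ is a differential operator with polynomial coefficients encoding derivatives of $g$, $F$, $V$ at $x_0$. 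Combining this expansion with a Helffer--Sj\"ostrand contour integral for $\varphi(\mathcal{L}_s)$ and a resolvent expansion $(\mathcal{L}_s - z)^{-1} \sim \sum_{k\geq 0} s^k R_k(z)$, one extracts a pointwise asymptotic expansion for the rescaled kernel $\mathcal{K}_s(0,0)$, and unrescaling yields
\[
\operatorname{tr}_{L^N_{x_0}} K_N(x_0,x_0) \sim N^{d/2}\sum_{r\geq 0} b_r(x_0,\varphi)\, N^{-r/2}, \qquad N\to\infty,
\]
with each $b_r(\cdot,\varphi)$ a smooth function on $M$ depending distributionally on $\varphi$. The leading coefficient $b_0(x_0,\varphi)$ is a sum of $\varphi$ evaluated at the Landau-level spectrum $\{\sum_j (2k_j+1)a_j(x_0) + V(x_0)\}$ of $\mathcal{L}_0$, paired against the Euclidean measure on $\ker F_{x_0}$, paralleling the Khuat-Duy formula \eqref{e:HDc0-bis}. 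Integration of the pointwise expansion over $M$ gives \eqref{e:main-tr} with $f_r(\varphi) = \int_M b_r(x,\varphi)\, dv_g(x)$.

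The main obstacle is proving uniform estimates on the remainder in the rescaled expansion when $\operatorname{rank} F_x$ or the multiplicities of the eigenvalues $a_j(x)$ of the operator $J_x$ from \eqref{e:defJ} are allowed to vary across $M$: in that case the model operator $\mathcal{L}_0$ degenerates at certain points and the spectral projectors of $\mathcal{L}_0$ are not continuous in $x_0$. The smoothness of $\varphi$ and the fact that we are pairing against a test function (rather than a spectral projector) is what makes the distributional coefficients $b_r(\cdot,\varphi)$ smooth in $x_0$ in the end; controlling this passage from pointwise-in-$z$ resolvent estimates to uniform $C^k$ bounds on the kernel as a function of $x_0$ is the technical heart of the argument. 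Since $\varphi$ is only assumed in $C^\infty_c(\mathbb{R})$, with no restriction on $\operatorname{supp}\hat\varphi$, only the zero-period contribution of the linearized flow is captured, which is precisely what the local model analysis produces and explains why no cleanness hypothesis on any classical flow is needed.
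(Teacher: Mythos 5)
Your proposal follows essentially the same route as the paper's proof: localize via finite propagation speed, pass to normal coordinates with a synchronous gauge, perform a Bismut-type rescaling with parameter $1/\sqrt{N}$ to exhibit the Landau-type model operator as the leading term in a formal Taylor expansion, feed this into the Helffer--Sj\"ostrand formula to obtain a pointwise on-diagonal kernel expansion, and integrate over $M$. One small overstatement: the claimed exponential off-diagonal bound $\exp(-c\sqrt{N}\,d(x,y))$ does not follow from finite propagation speed of $\cos(t\sqrt{H_N/N})$ for $\varphi\in C^\infty_c$ (whose Fourier transform is only Schwartz, not exponentially decaying); what one actually gets --- and what the paper uses in its estimate \eqref{e:K-K} --- is $O(N^{-k})$ for every $k$, which is entirely sufficient for the localization step.
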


Explicit formulas for the coefficients $f_r$ of this expansion use special differential operators $\mathcal H^{(x_0)}$ (model operators) associated with an arbitrary point $x_0\in M$. They are obtained from the operators $H_N$ by freezing the coefficients at $x_0$.

Let $x_0\in M$. We define a connection in the trivial Hermitian line bundle over $T_{x_0}M$ by the formula
\begin{equation}\label{e:nablaL0}
\nabla^{(x_0)}_{v}=\nabla_{v}+\frac{1}{2}R^L_{x_0}(w,v), \quad v\in T_w(T_{x_0} M).
\end{equation}
(recall that $R^L$ denotes the curvature of $\nabla^L $). The curvature of this connection is constant and equal to the form $R^L_{x_0}$ considered as a constant 2-form on $T_{x_0}M$. Denote by $\Delta^{(x_0)}$ the corresponding Bochner Laplacian.

The model operator $\mathcal H^{(x_0)}$ is a second-order differential operator in $C^\infty(T_{x_0}M)$ defined by the formula
\begin{equation}\label{e:DeltaL0p}
\mathcal H^{(x_0)}=\Delta^{(x_0)}+V(x_0).
\end{equation}
For any function $\varphi\in C^\infty_c(\mathbb R)$ the operator $\varphi(\mathcal H^{(x_0)})$ is an integral operator with smooth kernel $K_{\varphi(\mathcal H^ {(x_0)})}\in C^\infty(T_{x_0}M\times T_{x_0}M)$ with respect to the Euclidean volume form on $T_{x_0}M$ defined by the Riemannian metric $g_{x_0}$ .

The leading coefficient $f_{0}$ in the asymptotic expansion \eqref{e:main-tr} has the form
\begin{equation}\label{e:f0}
f_{0}=\int_Mf_{0}(x_0)dv_M(x_0),
\end{equation}
where $dv_M$ denotes the Riemannian volume form and
\begin{equation}\label{e:f0x0}
f_{0}(x_0)=K_{\varphi(\mathcal H^{(x_0)})}(0,0).
\end{equation}

The Schwartz kernel $K_{\varphi(\mathcal H^{(x_0)})}$ is easy to compute, which gives more explicit formulas for $f_{0}(x_0)$.
Recall that the skew-symmetric operator $J : TM\to TM$ is defined by the formula \eqref{e:defJ} and the nonzero eigenvalues of $J_x$ are denoted by $\pm ia_k(x), k=1,\ldots,n$ $ (a_k(x)>0)$ ($2n=\operatorname{rank} F_x$).
Put
\begin{equation}\label{e:def-Lambda}
\Lambda_{\mathbf k}(x_0)=\sum_{j=1}^n(2k_j+1) a_j(x_0)+V(x_0).
\end{equation}
In the case when $F$ has maximal rank ($d=2n$), the spectrum of $\mathcal H^{(x_0)}$ is a countable set of eigenvalues of infinite multiplicity:
\[
\sigma(\mathcal H^{(x_0)})=\left\{\Lambda_{\mathbf k}({x_0})\,:\, \mathbf k=(k_1,\cdots,k_n)\in\ZZ_+^n\right\}.
\]
If $d>2n$, the spectrum of $\mathcal H^{(x_0)}$ is a half-line:
\[
\sigma(\mathcal H^{(x_0)})=[\Lambda_0(x_0), +\infty),
\]
where
\[
\Lambda_0(x_0):=\sum_{j=1}^n a_j(x_0)+V(x_0).
\]
If $d=2n$ then
\begin{equation}\label{e:f0-1}
f_{0}(x_0) =\frac{1}{(2\pi)^{n}} \left(\prod_{j=1}^n a_j(x_0)\right) \sum_{\mathbf k\ in\ZZ_+^n}\varphi(\Lambda_{\mathbf k}(x_0)),
\end{equation}
and if $d>2n$, then
\begin{equation}\label{e:f0-2}
f_{0}(x_0) =\frac{1}{(2\pi)^{n}} \left(\prod_{j=1}^n a_j(x_0)\right) \sum_{\mathbf k\in\ZZ_+^n}\int_{\RR^{d-2n}} \varphi(|\xi|^2+\Lambda_{\mathbf k}(x_0))d\xi.
\end{equation}

Using the formulas \eqref{e:KD1}, \eqref{e:HDformula} and \eqref{e:HDformula-bis}, we can rewrite these formulas in terms of the Fourier transform of $\varphi$. In the case $d=2n$, we get
\begin{multline}\label{e:KD1a}
f_{0}(x_0)=\frac{1}{(-4i\pi)^{n}} \left(\prod_{j=1}^n a_j(x_0)\right)\\ \times \frac{1}{2\pi}\left\langle \frac{e^{itV(x_0)}}{\prod_{j=1}^n\sin a_j(x_0)(t+i0)}, \hat \varphi(t)\right\rangle, 
\end{multline}
and in the case $d>2n$
\begin{multline} \label{e:KD1b}
f_{0}(x_0) =\frac{e^{-3\pi (d/4) i}}{4^n\pi^{2n-d/2}} \left(\prod_{j=1}^n a_j(x_0)\right)\\ \times \frac{1}{2\pi}\left\langle \frac{e^{itV(x_0)}}{(t+i0)^{d/2-n}\prod_{j=1}^n\sin a_j(x_0)(t+i0)}, \hat \varphi(t)\right\rangle.
\end{multline}
For an arbitrary $r$ with $d=2n$ the coefficient $f_r(x_0)$ has the form
\[
f_r(x_0)=\sum_{\mathbf k \in\ZZ^n_+} \sum_{\ell=1}^{m} P_{\mathbf k,\ell}(x_0) \varphi^{(\ell-1)}(\Lambda_{\mathbf k}(x_0)),
\]
where $P_{\mathbf k,\ell}$ is polynomially bounded in $\mathbf k$, and for $d>2n$, 
\[
f_r(x_0)=\sum_{\mathbf k \in\ZZ^n_+}\sum_{\ell=1}^{m} \int_{\RR^{d-2n}} P_{\mathbf k,\ell,x_0}(\xi) \varphi^{(\ell-1)}(\Lambda_{\mathbf k}(x_0)+|\xi|^2)d\xi,
\]
where $P_{\mathbf k,\ell,x_0}(\xi)$ is a polynomial of degree $3r$ bounded polynomially in $\mathbf k$.

In the case of maximal rank $d=2n$, the formula \eqref{e:KD1a} has a natural geometric interpretation in terms of the magnetic geodesic flow. Since we are talking about some neighborhood of $x_0$, without loss of generality, we can assume that the magnetic system is exact, i.e. the Hermitian line bundle $L$ is trivial and there is a magnetic potential $\mathbf A$, and use the facts given in \ref{s:exact}. In particular, instead of the magnetic geodesic flow, one can consider the Hamiltonian flow $\phi^t: T^*M\to T^*M$ with the Hamiltonian $H$ given \eqref{e:H-Hsub} (see Section below \ref{s:ham}, in particular Example \ref{ex:1}).

Each point $j(x_0)=(x_0,\mathbf A(x_0))\in X_0$ is a critical point of $H$ and therefore a fixed point of the flow $\phi^t$. Thus, the linearized flow $d\phi_{t,j(x_0)}$ on the conormal space $N_{j(x_0)}X_0 := T_{j(x_0)}(T^*M)/T_{j (x_0)}X_0$ to $X_0$ at $j(x_0)$ is defined. In the case under consideration $d=2n$, the manifolds $X_0$ and $X_0^\bot$ are symplectic. There is an isomorphism $N_{j(x_0)}X_0\cong T_{j(x_0)}X_0^\bot$, and the bilinear form on $N_{j(x_0)}X_0$ induced by the canonical symplectic form $\omega$, coincides with the restriction of $\omega$ to $T_{j(x_0)}X_0^\bot$. Moreover, the flow $d\phi_{t,j(x_0)}$ is a linear Hamiltonian flow with respect to the induced symplectic structure on $N_{j(x_0)}X_0$. Its Hamiltonian is the normal Hessian $Q(H)_{j(x_0)}$, the quadratic form on $N_{j(x_0)}X_0$ defined by the second differential $d^2_{j(x_0)} H$. As indicated in Section \ref{s:exact}, the quadratic form $Q(H)_{j(x_0)}$ is positive definite, and there is a basis $\{f_k, f^\prime_k, k=1,\ldots ,n\}$ in $N_{j(x_0)}X_0$ such that
\[
\omega(f_k,f_\ell)=\omega(f^\prime_k,f^\prime_\ell)=0, \quad \omega(f_k,f^\prime_\ell)=\delta_{k\ell}, \quad k,\ell=1,\ldots,n,
\]
and
\begin{gather*}
Q(H)_{j(x_0)}(f_k, f_\ell)=
Q(H)_{j(x_0)}(f^\prime_k, f^\prime_\ell) =2a_k(x_0)\delta_{k\ell}, \\ Q(H)_{j(x_0)}(f_k, f^\prime_\ell) =0, \quad k,\ell=1,\ldots,n.
\end{gather*}
Thus, in the corresponding coordinates $(u_1,\ldots, u_n,v_1,\ldots,v_n)\in \mathbb R^{2n}$ on $N_{j(x_0)}X_0$ the linearized flow has the form $$d \phi_{t,j(x_0)}(u,v)=(u(t),v(t))\in N_{j(x_0)}X_0 \cong\mathbb R^{2n},$$ where
\begin{multline}
\begin{aligned}
\label{e:lin-flow}
u_k(t)=& \cos(2a_k(x_0)t)u_k+\sin(2a_k(x_0)t)v_k,\\
v_k(t)= & -\sin(2a_k(x_0)t)u_k+\cos(2a_k(x_0)t)v_k,\\
\end{aligned}
\quad k=1,\ldots, n, t\in \mathbb R.
\end{multline}
One can check that the expression on the right side of \eqref{e:KD1a} is written as (cf. \eqref{e:c0-nondeg})
\[
\frac{1}{\prod_{k=1}^n\sin a_k(x_0)t}=\frac{1}{|\det(I-d\phi_{t, j(x_0)})^{1/ 2}|}.
\]
In particular, the set of singularities of the Fourier transform of $f_0(x_0)$ coincides with the period set of the flow $d\phi_{t,j(x_0)}$:
\[
T=m_k\frac{\pi }{a_k(x_0)}, \quad k=1,\ldots, n, \quad m_k\in \mathbb Z.
\]
Using the relation of the magnetic geodesic flow with the flow $\phi^t$ described below in Section \ref{s:ham} (see Example \ref{ex:1}), one can easily reformulate the above facts in terms of the magnetic geodesic flow.

\subsection{Distribution of low-lying eigenvalues}
The study of the smoothed spectral density $Y_N(\varphi)$ given by  \eqref{e:defYp2-cr} is connected with the study of the asymptotic behavior of the eigenvalues of $H_N$ on intervals of the form $(\alpha N,\beta N)$ with $\alpha,\beta\geq 0$. An asymptotic formula for the eigenvalue distribution function of $\frac 1NH_N$ was proved by J.-P. Demailly \cite{Demailly85,Demailly91} using variational methods (such as Dirichlet-Neumann bracketing) without any restrictions on the curvature of $L$.

The eigenvalue distribution function $\mathcal N_N(\lambda)$ of $\frac 1NH_N$ is defined by 
\[
\mathcal N_N(\lambda)=\#\{j\in \ZZ_+: \frac 1N\nu_{N,j}\leq \lambda \},\quad \lambda\in \RR,
\]
where $\nu_{N,j}, j\in \ZZ_+$ are the eigenvalues of $H_N$ taking into account the multiplicities. By \cite[Theorem 0.6]{Demailly85} (see also \cite[Corollary 3.3]{Demailly91}), there exists a countable set $\mathcal D\subset \RR$ such that for any $\lambda\in \RR\setminus \mathcal D$
\begin{multline}\label{e:Demailly1}
\lim_{N\to +\infty}N^{-d/2}\mathcal N_N(\lambda)\\ =\frac{2^{n-d}\pi^{-d/2}}{\Gamma(\frac d2-n+1)} \sum_{\mathbf k\in \ZZ_+^n} \int_M (\lambda-\Lambda_{\mathbf k}({x}))_+^{d/2-n}\left(\prod_{j=1}^n a_j(x)\right) dv_M(x).
\end{multline}
It is easy to see that this formula agrees with the formulas
\eqref{e:f0-1} and \eqref{e:f0-2} (see also \eqref{e:HDc0} and \eqref{e:HDc0-bis}).

In the case of maximal rank $d=2n$, the formula \eqref{e:Demailly1} can be rewritten in terms of the Liouville volume form $\mu_F=\frac{1}{n!} F^n$ as follows:
\begin{equation}\label{e:Demailly2}
\lim_{p\to +\infty}N^{-n}\mathcal N_N(\lambda)=\frac{1}{(2\pi)^n} \int_M \#\{\mathbf k\in \ZZ_+^n : \Lambda_{\mathbf k}({x}) <\lambda \}\mu_F(x). 
\end{equation}
 
The proof of this formula, based on the use of the heat equation, was given in \cite{bouche90}. We refer the reader to the papers \cite{bismut87,bouche90,Demailly91,ma-ma:book,ma-ma-zelditch15} and the bibliography given there for the study of the heat kernel associated $\frac 1NH_N$, as well as to the papers \cite{Morin19,charles21}, where the asymptotic Weyl formula was studied in the case when the curvature $R^L$ is non-degenerate. There is also an extensive literature devoted to studies of the asymptotic behavior of the low-lying eigenvalues of $H_N$. See, for example, books and review papers \cite{FH10,HK-luminy,HK14,Raymond:book}, as well as recent papers \cite{ma-savale,Morin19,Morin21a} (and the references therein).

In \cite{higherLL,charles21} an asymptotic description of the spectrum of $\frac 1NH_N$ is given in terms of the spectra of the model operators \eqref{e:DeltaL0p} in the case when the form $F$ has maximal rank. Denote by $\Sigma$ the union of the spectra of the model operators:
\begin{equation*}
\Sigma=\left\{\Lambda_\mathbf {k}(x_0)\,:\, \mathbf k\in\ZZ_+^n, x_0\in M \right\}.
\end{equation*}

\begin{theorem}[\cite{charles21}]\label{t:spectrum}
For any $K>0$ there exists $c>0$ such that for any $N\in \NN$ the spectrum of $\frac 1NH_N$ in the interval $[0,K]$ is contained in the $cN^{-1/ 2}$-neighborhood of $\Sigma$.
\end{theorem}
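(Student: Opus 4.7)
The plan is to prove the contrapositive: if $\lambda \in [0,K]$ and $\mathrm{dist}(\lambda, \Sigma) \geq c N^{-1/2}$ for a sufficiently large constant $c$ independent of $N$, then $\lambda$ lies in the resolvent set of $\mathcal H_N := \frac{1}{N} H_N$. To show this, I would construct an approximate right inverse of $\mathcal H_N - \lambda$ by patching together resolvents of the model operators $\mathcal H^{(x_0)}$, exploiting the fact that, in suitable coordinates and gauge, $\mathcal H_N$ is an $O(N^{-1/2})$-perturbation of $\mathcal H^{(x_i)}$ in each ball of radius $\rho_N$ around a point $x_i \in M$.

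Concretely, I would fix a smooth partition of unity $\{\chi_{i,N}^2\}$ on $M$ subordinate to a cover by balls $B(x_i, \rho_N)$ at a suitably chosen scale $\rho_N$. Around each $x_i$ I would use normal coordinates together with the radial (Fock) gauge for $L^N$, so that the local connection form vanishes at $x_i$ with linearization $\tfrac{1}{2} R^L_{x_i}$. Under the unitary rescaling $z = \sqrt{N}(x - x_i)$, followed by the natural identification $U_i$ of sections of $L^N$ over $B(x_i, \rho_N)$ with functions on $T_{x_i}M$, the operator $\mathcal H_N$ is transformed into
\begin{equation*}
U_i\, \mathcal H_N\, U_i^{-1} = \mathcal H^{(x_i)} + N^{-1/2} \mathcal R_{i,N},
\end{equation*}
where $\mathcal R_{i,N}$ is a second-order differential operator whose coefficients are polynomial in $z$, obtained from the Taylor remainders of $g$, $\mathbf A$ and $V$ at $x_i$. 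Since $\mathrm{dist}(\lambda, \sigma(\mathcal H^{(x_i)})) \geq c N^{-1/2}$, the local resolvent $R_i(\lambda) := (\mathcal H^{(x_i)} - \lambda)^{-1}$ satisfies $\|R_i(\lambda)\| \leq c^{-1} N^{1/2}$.

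I would then define the global parametrix
\begin{equation*}
Q_N := \sum_i \chi_{i,N}\, U_i^{-1} R_i(\lambda) U_i\, \chi_{i,N},
\end{equation*}
and compute $(\mathcal H_N - \lambda) Q_N = I + E_N$, using $\sum_i \chi_{i,N}^2 = 1$. The error $E_N$ is a sum of two types of terms: Taylor-remainder contributions $N^{-1/2}\chi_{i,N} U_i^{-1} \mathcal R_{i,N} R_i(\lambda) U_i \chi_{i,N}$, whose prefactor $N^{-1/2}$ balances the resolvent norm $O(N^{1/2})$ to give $O(c^{-1})$, and commutator contributions $[\mathcal H_N, \chi_{i,N}] U_i^{-1} R_i(\lambda) U_i \chi_{i,N}$ coming from the fact that $\chi_{i,N}$ does not commute with the second-order part of $\mathcal H_N$. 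If $c$ is taken large and $\rho_N$ is chosen appropriately, one expects $\|E_N\| \leq 1/2$ for all large $N$, so that $\mathcal H_N - \lambda$ admits the bounded right inverse $Q_N(I+E_N)^{-1}$ and, by self-adjointness, $\lambda \notin \sigma(\mathcal H_N)$.

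The main obstacle is the quantitative control of the error $E_N$ on the rescaled balls of radius $\sqrt{N}\rho_N$, which may grow with $N$. The coefficients of $\mathcal R_{i,N}$ and the gradient bounds needed for the commutators grow polynomially in $z$, while the bare resolvent norm $c^{-1}N^{1/2}$ is already large. Closing the estimate therefore requires weighted $L^2$-bounds on $R_i(\lambda)$ which reflect the Gaussian-type decay, in the $2n$ directions transverse to $\ker F_{x_i}$, of the spectral projectors of the harmonic-oscillator-like operator $\mathcal H^{(x_i)}$. These weighted bounds must compensate the polynomial growth of $\mathcal R_{i,N}$ and of $|\nabla \chi_{i,N}|$ simultaneously, via a Schur-test or Cotlar--Stein type argument, uniformly in $i$ and in $\lambda \in [0,K]$. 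Synchronizing the choice of $\rho_N$ with these weighted estimates so as to preserve the $N^{-1/2}$ threshold is the technical crux of the argument.
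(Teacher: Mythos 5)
The paper states Theorem~\ref{t:spectrum} as a result of Charles~\cite{charles21} and does not reproduce a proof, so there is no in-text argument to match your sketch against; the relevant comparisons are with \cite{higherLL} and \cite{charles21}. Your strategy --- rescale by $\sqrt{N}$ in a radial gauge around each point, compare $\mathcal H_N$ to the frozen-coefficient model $\mathcal H^{(x_i)}$, and patch local resolvents with a partition of unity at scale $\rho_N$ --- is the natural parametrix approach to this kind of localization, and the first two ingredients (radial gauge, $\sqrt N$-rescaling, model operators) are exactly what the paper itself uses in the proof of Theorem~\ref{t:trace}.

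The gap is in closing the estimate, and it is not a mere technicality: the two error terms you exhibit impose contradictory requirements on $\rho_N$ at the $N^{-1/2}$ threshold. The Taylor-remainder term $N^{-1/2}\mathcal R_{i,N}R_i(\lambda)$ has coefficients that, on the rescaled ball of radius $\sqrt N\rho_N$, grow like $(\sqrt N\rho_N)^3$ (the operator $\mathcal H^{(1)}$ in \eqref{e:Hj} has at-most-cubic polynomial coefficients), while $\|R_i(\lambda)\|\le c^{-1}N^{1/2}$, so smallness forces $\rho_N\ll N^{-1/2}$. The commutator term, on the other hand, carries a coefficient $N^{-1}\rho_N^{-1}$ from $\tfrac1N\nabla\chi_{i,N}\cdot\nabla^{L^N}$ against the gradient-resolvent bound $\|\nabla^{L^N}R_i(\lambda)\|\lesssim N$, giving a contribution of size $\rho_N^{-1}$, which is never small for $\rho_N\le\operatorname{diam}M$. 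The Gaussian kernel decay you invoke does not by itself rescue this: the commutator lives on an annulus of radius $\sim\sqrt N\rho_N$ and the right cutoff $\chi_{i,N}$ on a ball of comparable size, so there is no spatial separation to exploit in the regime $\rho_N\sim N^{-1/2}$. This is precisely why, as the paper notes immediately after the theorem, \cite{higherLL} --- which carries out essentially this parametrix strategy under the weaker hypothesis of bounded geometry --- reaches only a $cN^{-1/4}$-neighborhood of $\Sigma$. Obtaining the sharp $N^{-1/2}$ in \cite{charles21} requires a genuinely better local model rather than sharper estimates for the same model: one must cancel the $O(N^{-1/2})$ correction $\mathcal H^{(1)}$, e.g.\ by a Birkhoff/Toeplitz normal form or an effective-Hamiltonian reduction onto the Landau bands of $\mathcal H^{(x_0)}$, showing the effective operator on each band equals $\Lambda_{\mathbf k}(x)$ up to $O(N^{-1})$. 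That cancellation is the missing idea in your proposal; as written, your argument proves at best the $N^{-1/4}$ statement of \cite{higherLL}, not Theorem~\ref{t:spectrum}.
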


In \cite{higherLL} a similar statement is proved with a weaker estimate $cN^{-1/4}$ instead of $cN^{-1/2}$ for a wider class of Riemannian manifolds of bounded geometry.

The set $\Sigma$ is a closed subset of the real line $\RR$, which can be represented as a union of closed intervals 
\[
\Sigma=\bigcup_{\mathbf k\in\ZZ_+^n}[\alpha_{\mathbf k}, \beta_{\mathbf k}]
\]
where for any $\mathbf k\in\ZZ_+^n$ the interval $[\alpha_{\mathbf k}, \beta_{\mathbf k}]$ is the image of the function $\Lambda_{\mathbf k}$ on $M $: $[\alpha_{\mathbf k}, \beta_{\mathbf k}]=\{\Lambda_{\mathbf k}({x_0}) : x_0\in M\}$. In general, the bands $[\alpha_{\mathbf k}, \beta_{\mathbf k}]$ can overlap without any gaps, and then $\Sigma$ is the semiaxis $[\Lambda_0,+\infty)$ with $ \Lambda_0=\inf_{x\in M} \Lambda_0(x)$. In some cases $\Sigma$ may have gaps: $[\Lambda_0,+\infty)\setminus \Sigma\not=\emptyset$. For example, if $V(x)\equiv 0$ and functions $a_j$ can be chosen constant:
\begin{equation}\label{e:aj-constant}
a_j(x)\equiv a_j, \quad x\in M, \quad j=1,\ldots,n,
\end{equation}
then $\Sigma$ is a countable discrete set. In particular, if $J$ is an almost-complex structure ($J^2=-I$; almost-K\"ahler case) and $V(x)\equiv 0$, then $a_j=1, j=1,\ldots ,n$, and
\[
\Sigma=\left\{2k+n\,:\, k\in\ZZ_+\right\}.
\]
The set $\Sigma$ can also have gaps if the functions $a_j$ are not constant, but change little enough. In these cases, Theorem \ref{t:spectrum} also implies the existence of gaps in the spectrum of $\frac 1NH_N$. In particular, if $V(x)\equiv 0$ and the condition \eqref{e:aj-constant} is true, then the spectrum of $\frac 1NH_N$ is contained in the union of neighborhoods of points $a_j$ of size $O(N^{- 1/2})$. In the almost-K\"ahler case, Theorem \ref{t:spectrum} was proved in \cite{FT}. 

The spectral data of the operator $H_N$ can be used to construct the Berezin-Toeplitz quantizations of the symplectic manifold $(M,F)$. The space of such a quantization is the spectral subspace of the operator $\frac 1NH_N$ corresponding to the eigenvalues located near some isolated closed component of  $\Sigma$, and the quantization operators are the Toeplitz operators associated with this subspace. This idea was first proposed by Guillemin and Uribe in \cite{Gu-Uribe}. They considered the Bochner-Schr\"odinger operator with the potential $V(x)=-\tau(x)$, where $\tau(x):=\operatorname{Tr} |J_x|, x\in M$, called the renormalized Bochner Laplacian, $\Delta_N:=\Delta^{L^N}-N\tau$.
  
An important special case (and motivation for such a definition) is the case of a K\"ahler manifold $M$. If we take $L$ to be a holomorphic line bundle on $M$ endowed with a holomorphic connection (Chern connection), then the renormalized Bochner Laplacian coincides with the twice Kodaira Laplacian, $\Delta_N=2(\bar\partial^{L^N}) ^*\bar\partial^{L^N}$. The quantum space in this case consists of holomorphic sections of $L^N$. The corresponding Berezin-Toeplitz quantization is called K\"ahler quantization.

In a general case, it was proved in \cite{Gu-Uribe} that there exist constants $c>0$ and $b_0>0$ such that for any $N\in \NN$ the spectrum of the renormalized Bochner Laplacian $\Delta_N$ is contained in $(-c, c)\cup [2b_0N-c, \infty)$. A simpler proof and precise expression for the constant $b_0$ was given in \cite[Corollary 1.2]{ma-ma02}. Note that this result agrees with Theorem \ref{t:spectrum}, since in this case $\Lambda_0(x)\equiv 0$ and $\Sigma$ has a gap near zero: $\Sigma\subset\{0\} \cup [2b_0, \infty)$. The quantum space is generated by the eigenfunctions of  $H_N$ with eigenvalues from the interval $(-c, c)$.
The corresponding quantization was constructed in \cite{B-Uribe} in the almost-K\"ahler case and in \cite{ioos-lu-ma-ma,Kor18} for an arbitrary Riemannian metric (see also \cite{BMS94} about K\"ahler quantization and \cite{B-Uribe,ma-ma08a,ma-ma:book} for the quantization associated with the spin$^c$ Dirac operator). In \cite{charles20a,Kor20a}, Berezin-Toeplitz quantization are constructed for more general spectral subspaces corresponding to arbitrary isolated closed components of $\Sigma$.

\subsection{Proof by methods of local index theory}
In this section, we briefly describe the main steps in the proof of Theorem \ref{t:trace} following \cite{Bochner-trace}.
The proof combines the methods of functional analysis (first of all, functional calculus based on the Helffer-Sj\"{o}strand formula \cite{HS-LNP345} and norm estimates in suitable Sobolev spaces) with the methods of local index theory developed in \cite{dai-liu-ma,ma-ma:book,ma-ma08} for the study of the asymptotic behavior of the (generalized) Bergman kernels and originating in the paper by Bismut-Lebeau \cite{BL}. Note that, in contrast to \cite{dai-liu-ma,ma-ma:book,ma-ma08}, we do not require that the curvature of $L$ be non-degenerate. A similar strategy was applied in a close situation by N. Savale in \cite{Savale17}. 

First of all, we localize the problem in a neighborhood of an arbitrary point $x_0\in M$ using the constructions from \cite[Sections 1.1 and 1.2]{ma-ma08}. Denote by $B^{M}(x_0,r)$ and $B^{T_{x_0}M}(0,r)$ the open balls in $M$ and $T_{x_0}M$ centered at $x_0 $ with radius $r$, respectively. Let $r_M$ denote the injectivity radius of the Riemannian manifold $(M,g)$. We will identify the balls $B^{T_{x_0}M}(0,r_M)$ and $B^{M}(x_0,r_M)$ using the exponential map $\exp^M_{x_0}: T_{x_0} M \to M$. Let us choose a trivialization of the bundle $L$ over $B^M(x_0,r_M)$ by identifying its fiber $L_Z$ at the point $Z\in B^{T_{x_0}M}(0,r_M)\cong B^M( x_0,r_M)$ with the fiber $L_{x_0}$ at $x_0$ by means of the parallel transport given by the connection $\nabla^L$ along the curve $\gamma_Z : [0,1]\ni u \to \exp ^M_{x_0}(uZ)$. Consider the trivial Hermitian line bundle $L_0$ on $T_{x_0}M$ with fiber $L_{x_0}$. The above identifications induce a Riemannian metric $g$ on $B^{T_{x_0}M}(0,r_M)$, as well as a connection $\nabla^L$ and a Hermitian metric $h^L$ on the restriction of $L_0$ to $B^{T_{x_0}X}(0,r_M)$.

Now we fix some $\varepsilon \in (0,r_M)$ and extend all the geometric objects introduced above from $B^{T_{x_0}M}(0,\varepsilon)$ to $T_{x_0}M$ as follows . Let $\rho : \mathbb R\to [0,1]$ be a smooth finite even function supported in $(-r_M,r_M)$ such that $\rho(v)=1$ if $|v|<\varepsilon$. Consider the map $\varphi : T_{x_0}M\to T_{x_0}M$ defined by $\varphi (Z)=\rho(|Z|)Z$. We define the Riemannian metric $g^{(x_0)}$ on $T_{x_0}M$ by $g^{(x_0)}_Z=g_{\varphi(Z)}, Z\in T_{x_0}M$ , and the Hermitian connection $\nabla^{L_0}$ on $(L_0,h^{L_0})$ by 
\[
\nabla^{L_0}_u=\nabla^L_{d\varphi(Z)(u)}. \quad Z\in T_{x_0}M, \quad u\in T_Z(T_{x_0}M),
\]
where we use the canonical isomorphism $T_{x_0}M\cong T_Z(T_{x_0}M)$. Finally, we set $V^{(x_0)}=\varphi^*V$.
Let $\Delta^{L_0^N}$ denote the associated Bochner Laplacian on $C^\infty(T_{x_0}M, L_0^N)$.

We introduce the operator $H^{(x_0)}_N$ on $C^\infty(T_{x_0}M,L_0^N)$ by the formula
\[
H^{(x_0)}_N=\Delta^{L_0^N}+ NV^{(x_0)}.
\]
It is easy to see that for any function $u \in {C}^\infty_c(T_{x_0}M)$ supported in $B^{T_{x_0}X}(0,\varepsilon)$,
\begin{equation}\label{e:Hp=HX0p}
H_Nu(Z)=H^{(x_0)}_Nu(Z), \quad Z\in T_{x_0}M.
\end{equation}

Let $K_{\varphi(\frac 1N H^{(x_0)}_N)}\in {C}^{\infty}(T_{x_0}M\times T_{x_0}M)$ be the Schwartz kernel of $\varphi(\frac 1N H^{(x_0)}_N)$ with respect to the Riemannian volume form $dv^{(x_0)}$ on $(T_{x_0}M, g^{(x_0)}) $. Writing the Schwartz kernel $K_{\varphi(\frac 1NH_N)}$ of $\varphi(\frac 1NH_N)$ in local coordinates, we obtain a family of smooth functions
\[
K_{\varphi(\frac 1NH_N),x_0}\in {C}^{\infty}(B^{T_{x_0}M}(0,r_M)\times B^{T_{x_0}M}(0 ,r_M))
\]
parametrized by $x_0\in M$:
\begin{multline}\label{e:defKvarphi}
K_{\varphi(\frac 1N H_N),x_0}(Z,Z^\prime)=K_{\varphi(\frac 1N\varphi(H_N))}(\exp^M_{x_0}(Z),\ exp^M_{x_0}(Z^\prime)), \\ Z,Z^\prime\in B^{T_{x_0}M}(0,r_M).
\end{multline}
Using the equality \eqref{e:Hp=HX0p} and the finite propagation speed property, one can show that for any $\varepsilon_1\in (0,\varepsilon)$ and $k\in \mathbb N$ there exists $C>0 $ such that
\begin{equation} \label{e:K-K}
|K_{\frac 1N\varphi(H_N),x_0}(Z,Z^\prime)-K_{\varphi(\frac 1NH^{(x_0)}_N)}(Z,Z^\prime)|\leq CN^{-k}
\end{equation}
for any $N\in\NN$, $x_0\in M$ and $Z,Z^\prime\in B^{T_{x_0}M}(0,\varepsilon_1)$. A similar estimate is also valid for covariant derivatives of any order with respect to $x_0$.
This fact allows us to reduce our considerations to the case of the $C^\infty$-bounded family $\frac 1NH^{(x_0)}_N$ of second-order differential operators acting on $C^\infty(T_{x_0}M, L_0^ N)\cong C^\infty(T_{x_0}M)$ (parametrized by $x_0\in M$).

We will now use a scaling introduced in \cite[section 1.2]{ma-ma08}. Denote $t=\frac{1}{\sqrt{N}}$ and for $s\in C^\infty(T_{x_0}M)$ set
\[
S_ts(Z)=s(Z/t), \quad Z\in T_{x_0}M.
\]
Let $dv_{M,x_0}$ denote the Riemannian volume form of the Euclidean space $(T_{x_0}M, g_{x_0})$. We define a smooth function $\kappa_{x_0}$ on $B^{T_{x_0}M}(0,r_M)\cong B^{M}(x_0,r_M)$ using the equation
\[
dv_M(Z)=\kappa_{x_0}(Z)dv_{M,x_0}(Z), \quad Z\in B^{T_{x_0}M}(0,r_M).
\]
Let us define the transformation of $\frac 1N H_N^{(x_0)}$ by the formula
\begin{equation}\label{scaling}
\mathcal H_t=S^{-1}_t\kappa_{x_0}^{\frac 12}\frac 1N H_N^{(x_0)}\kappa_{x_0}^{-\frac 12}S_t,
\end{equation}
By definition, the operator $\mathcal H_t$ is a self-adjoint operator in $L^2(T_{x_0}M)$, and its spectrum coincides with the spectrum of $\frac 1N H_N^{(x_0)}$.

An arbitrary orthonormal basis $\mathbf e=\{e_j, j=1,2,\ldots,d\}$ in $T_{x_0}M$ defines an isomorphism $T_{x_0}M\cong \mathbb R^{d} $ and allows us to transfer the operator $\mathcal H_t$ to $L^2(R^{d})$.
Thus, we obtain a family of self-adjoint differential operators on $C^\infty(\RR^d)$ depending smoothly on $\mathbf e$, which we will also denote by $\mathcal H_t$, omitting the index $\mathbf e$.

It can be shown that the operators $\mathcal H_t$ depend smoothly on $t$ up to $t=0$, and their limit at $t\to 0$ coincides with the operator $\mathcal H^{(x_0)}$ given by \eqref{e:DeltaL0p}.
Expanding the coefficients of $\mathcal H_t$ into a Taylor series in $t$, for any $m\in \NN$, we get
\begin{equation}\label{e:Ht-formal}
\mathcal H_t=\mathcal H^{(0)}+\sum_{j=1}^m \mathcal H^{(j)}t^j+\mathcal O(t^{m+1}), \quad \mathcal H^{(0)}=\mathcal H^{(x_0)},
\end{equation}
where there exists $m^\prime\in \NN$ such that for any $k\in\NN$ and $t\in [0,1]$ all the derivatives of the coefficients of $\mathcal O(t^{m+1 })$ up to order $k$ are bounded by $Ct^{m+1}(1+|Z|)^{m^\prime}$.

The operators $\mathcal H^{(j)}, j\geq 1,$ have the form (see \cite[Theorem 1.4]{ma-ma08}):
\begin{equation}\label{e:Hj}
\mathcal H^{(j)}=\sum_{k,\ell=1}^{d} a_{k\ell,j}\frac{\partial^2}{\partial Z_k\partial Z_\ell} +\sum_{k=1}^{d} b_{k,j}\frac{\partial}{\partial Z_k}+c_{j},
\end{equation}
where $a_{k\ell,j}$ is a homogeneous polynomial in $Z$ of degree $j$, $b_{kj}$ is a polynomial in $Z$ of degree $\leq j+1$ (of the same parity as $j-1$) and $c_{j}$ is a polynomial in $Z$ of degree $\leq j+2$ (of the same parity as $j$). In \cite[Theorem 1.4]{ma-ma08}, explicit formulas for the operators $\mathcal H^{(1)}$ and $\mathcal H^{(2)}$ are given.

Now we apply the Helffer-Sj\"{o}strand formula \cite{HS-LNP345}:
\begin{equation}\label{e:HS}
\varphi(\mathcal H_{t})=-\frac{1}{\pi}
\int_\CC \frac{\partial \tilde{\varphi}}{\partial \bar \lambda}(\lambda)(\lambda-\mathcal H_{t})^{-1}d\mu d\nu,
\end{equation}
where $\tilde{\varphi}\in C^\infty_c(\CC)$ is an almost-analytic extension of $\varphi$ satisfying the condition
\[
\frac{\partial \tilde{\varphi}}{\partial \bar \lambda}(\lambda)=O(|\nu|^\ell),\quad \lambda=\mu+i\nu, \quad \nu\to 0,
\]
for any $\ell\in \NN$. Using this formula, estimates of the resolvents $(\lambda-\mathcal H_{t})^{-1}$ in Sobolev spaces, and the Sobolev embedding theorem in an appropriate way, one can prove that the Schwartz kernel $K_{\varphi(\mathcal H_{ t})}(Z,Z^\prime)$ of $\varphi(\mathcal H_{t})$ is a smooth function of the variables $Z,Z^\prime\in \RR^{d}$ and $t\geq 0$ (depending smoothly on $\mathbf e$). Therefore,  Taylor's formula implies an asymptotic expansion 
\begin{equation}\label{e:Kft}
K_{\varphi(\mathcal H_{t})}(Z,Z^\prime) \sim \sum_{r=0}^\infty F_{r}(Z,Z^\prime) t^r, \quad t\to 0+,
\end{equation}
with some $F_{r}=F_{r,\mathbf e}\in C^\infty(\mathbb R^{d}\times \mathbb R^{d})$, uniform in $\mathbf e$. 

By \eqref{scaling}, we have 
\[
K_{\varphi(\frac 1NH^{(x_0)}_p)}(Z,Z^\prime)=t^{-d}\kappa^{-\frac 12}(Z)K_{\varphi(\mathcal H_{t})}(Z/t,Z^\prime/t)\kappa^{-\frac 12}(Z^\prime), \quad Z,Z^\prime \in \mathbb R^{ d}.
\]
Therefore, \eqref{e:Kft} and \eqref{e:K-K} imply the existence of an asymptotic expansion of the kernel $K_{\varphi(\frac 1NH_N)}$ of $\varphi(\frac 1NH_N)$ on the diagonal, uniform in $x_0$,
\[
K_{\varphi(\frac 1NH_N)} (x_0,x_0) \sim N^{\frac d2} \sum_{r=0}^\infty f_{r}(x_0)N^{-\frac{r} {2}},\quad N\to \infty,\quad x_0\in M,
\]
where
\[
f_{r}(x_0)=F_{r,\mathbf e}(0,0).
\]
for any orthonormal frame $\mathbf e$ at $x_0$. This immediately implies the asymptotic expansion \eqref{e:main-tr} with
\[
f_r=\int_Mf_r(x_0)dv_M(x_0),
\]
Moreover, for $r=0$ we have
\[
F_0= -\frac{1}{\pi }\int_\CC \frac{\partial \tilde{\varphi}}{\partial \bar \lambda}(\lambda) (\lambda - \mathcal H^{( 0)})^{-1} d\mu d\nu=\varphi(\mathcal H^{(0)}),
\]
which proves the formula \eqref{e:f0x0}.

Note that, using the technique of weighted Sobolev spaces, we can prove asymptotic expansions for the kernel $K_{\varphi(\frac 1N H_N),x_0}(Z,Z^\prime)$ defined by \eqref{e:defKvarphi}, in some fixed neighborhood of the diagonal, i.e. for any $x_0\in M$ and $Z,Z^\prime\in B^{T_{x_0}M}(0,\varepsilon)$ with some $\varepsilon>0$. Such expansions are  generalizations of asymptotic expansions for (generalized) Bergman kernels proved in \cite[Theorem 4.18']{dai-liu-ma}, \cite[Theorem 4.2.1]{ma-ma:book} and \cite[Theorem 1]{Kor18}. They are often called full off-diagonal expansions, following the book by Ma and Marinescu \cite[Chapter 4]{ma-ma:book}. We refer the reader to \cite{Bochner-trace} for more details. 

\section{Trace formulas and methods of microlocal analysis} \label{s:trace1}

In this section, we describe another approach to proving the trace formula for an arbitrary magnetic system, using methods of microlocal analysis. It is based on the idea first proposed by Colin de Verdi\`ere \cite{CdV79} to study semiclassical spectral problems and used in \cite{PU95,BPU95} to prove Gutzwiller's formula. This idea consists in reducing the semiclassical spectral problem to some asymptotic problem for the joint eigenvalues of a pair of commuting pseudodifferential operators and then applying well-developed methods for studying high energy spectral asymptotics. This approach was subsequently extended to the problem under consideration in \cite{Gu-Uribe89,BU07}. We will use the notation introduced in Introduction.

\subsection{Reduction to the case of commuting operators}\label{s:inv}
The main idea is to interpret the semiclassical parameter $N$ as an eigenvalue of the operator $D_\theta=i^{-1}\partial/\partial\theta$ on the unit circle $S^1=\RR/2\pi \ZZ$ with coordinate $\theta$, considering $\theta$ as an additional independent variable. In local coordinates, this means that it is proposed to consider instead of the operator \eqref{e:DLp} the so-called horizontal Laplacian $\Delta_{\rm h}$ on $M\times S^1$ given by the formula
\begin{multline*}
\Delta_{\rm h}=-\frac{1}{\sqrt{|g(x)|}}\sum_{1\leq j,\ell\leq n}\left(\frac{\partial}{\partial x^j}-A_j(x)\frac{\partial}{\partial \theta}\right)\times \\ \times \left[\sqrt{|g(x)|}
g^{j\ell}(x) \left(\frac{\partial}{\partial
x^\ell}-A_\ell(x)\frac{\partial}{\partial \theta}\right)\right]+\frac{N}{i}\frac{\partial}{\partial \theta}.
\end{multline*}

In the general case, let us consider the principal $S^1$-bundle $\Pi : S\to M$ associated with $L$:
\[
S=\{p\in L^* : |p|_{h^{L^*}}=1\}.
\]
Denote by $e^{i\theta}\cdot p$ the action of $\theta\in S^1$ on $p\in S$ given by complex multiplication in the fibers of $L$. Let $\partial/\partial\theta$ denote the infinitesimal generator of the $S^1$-action on $S$. 

The connection $\nabla^L$ induces a connection on the principal bundle $\Pi : S\to M$, that is, the real-valued connection $1$-form $\alpha$ on $S$ and the $S^1$-invariant distribution $H\subset TS$ transversal to the fibers of $\Pi$ (the horizontal distribution of the connection). Thus, for any $p\in S$, the tangent space $T_pS$ can be represented as a direct sum of subspaces
\[
T_pS = V_p\oplus H_p,
\]
where $V_p$ is the tangent space to the fiber of $\Pi$ (it is generated by the vector $\partial/\partial\theta$) and $H_p$ is the horizontal space of the connection. We define the Riemannian metric $g_S$ on $S$ as follows (the Kaluza-Klein metric). The metric $g_S$ on $V_p$ coincides with the standard Riemannian metric $d\theta^2$ on $S^1=\mathbb R/2\pi\mathbb Z$. The restriction of the metric $g_S$ to $H_p$ is consistent with the Riemannian metric $g$ to $M$ under the linear isomorphism
\begin{equation}\label{e:dPi}
d\Pi_p : H_p\subset T_pS\stackrel{\cong}{\to} T_xM, \quad x=\Pi(p),
\end{equation}
defined by the differential of $\Pi$. Finally, the subspaces $V_p$ and $H_p$ are orthogonal. The projection $\Pi : (S,g_S) \to (M,g)$ is a Riemannian submersion with totally geodesic fibers.

Given the distribution $H$ and the Riemannian metric $g_S$, one naturally constructs the horizontal Laplacian $\Delta_{\rm h}$, which is a second-order differential operator on $S$. Denote by $\Omega^1$ the space of smooth differential 1-forms on $S$. For any $p\in S$, there is a decomposition of  $T^*_pS$ as a direct sum of subspaces
\begin{equation}\label{e:decomp*}
T^*_pS =V^*_p\oplus H^*_p,
\end{equation}
and the corresponding decomposition of the space of smooth differential 1-forms:
\[
\Omega^1=\Omega^1_V \oplus \Omega^1_H, \quad \Omega^1_V=C^\infty(S,V^*), \quad \Omega^1_H=C^\infty(S,H ^*).
\]
Denote by $d_{\rm h} : C^\infty(S)\to \Omega^1_H$ the composition of the de Rham differential $d: C^\infty(S)\to \Omega^1$ and the projection $\Omega ^1$ to $\Omega^1_H$. The horizontal Laplacian $\Delta_{\rm h}$ is defined by the formula
\[
\Delta_{\rm h}=d^*_{\rm h}d_{\rm h}.
\]

The operator $\Delta_{\rm h}$ is not an elliptic operator. Its principal symbol is given by the formula
\[
\sigma(\Delta_{\rm h})(p,\nu)=|\nu_H|^2, \quad p\in S, \quad \nu\in T^*_pS,
\]
where $\nu_H\in H^*_p$ is the component of $\nu\in T^*_pS$ in the decomposition \eqref{e:decomp*}. The subspace $V^*_p$ is one-dimensional and is generated by the connection form $\alpha_p$. Therefore, the characteristic set of the operator $\Delta_{\rm h}$, i.e. the set of zeros of its principal symbol $\sigma(\Delta_{\rm h})$ has the form
\begin{equation}\label{e:char1}
\mathcal Z=V^*=\{(p,r\alpha_p)\in T^*S : p\in S, r\in \mathbb R\}.
\end{equation}
It is a $d+2$-dimensional homogeneous submanifold of $2d+2$-dimensional manifold $T^*S$. 
 
The eigenvalues of $\Delta_{\rm h}$ are described as follows. For any $N\in \ZZ$ consider the space $E_N$ of smooth functions on $S$ such that
\[
f(e^{i\theta}\cdot p)=e^{iN\theta}f(p)\ \text{for any}\ p\in S\ \text{and}\ \theta\in S^1.
\]
There is an isomorphism
\begin{equation}\label{e:iso}
C^\infty(M,L^N)\cong E_N, \quad N\in \mathbb Z,
\end{equation}
which assigns to each $s\in C^\infty(M,L^N)$ the function $\hat s\in C^\infty(S)$ given by  
\begin{equation}\label{e:iso1}
\hat s(p)=\langle s(\Pi(p)), p^{\otimes N}\rangle,\quad p\in S\subset L^*.
\end{equation}
Under the isomorphism \eqref{e:iso}, the restriction of $\Delta_{\rm h}$ to the subspace $E_N$ corresponds to the magnetic Laplacian $\Delta^{L^N}$. Therefore, the spectrum of the operator $\Delta_{\rm h}$ is the union of the spectra of the operators $\Delta^{L^N}$ over all $N\in \mathbb Z$:
\[
{\rm spec}(\Delta_{\rm h})=\{\nu_{N,j} : j\in \NN, N\in \ZZ\}. 
\]

On the other hand, the eigenvalues of the first order differential operator $D_\theta=\frac{1}{i}\frac{\partial}{\partial \theta}$ are integers and the eigenspace corresponding to the eigenvalue $N\ in \ZZ$ is the space $E_{N}$. Since the $S^1$-action on $S$ is isometric, the operator $\Delta_{\rm h}$ commutes with $D_\theta$. The joint eigenvalues of the operators $\Delta_{\rm h}$ and $D_\theta$ are $\{(\nu_{N,j},N) , j\in \NN, N\in \ZZ\}$.

These facts allow us to express the smoothed spectral density of the Bochner-Schr\"odinger operator $H_N$ in terms of the joint spectral characteristics of some commuting operators, which will be discussed later in Sections \ref{s:GU} and \ref{s:BU}.

\subsection{Hamiltonian reduction and magnetic geodesic flow}\label{s:ham}
The above construction of lifting to the bundle $S$ also allows us to give a natural definition of the classical dynamical system associated with the magnetic Laplacian, i.e. magnetic geodesic flow. Namely, the magnetic geodesic flow $\Phi$ on $T^*M$ coincides with the Hamiltonian reduction of the Riemannian geodesic flow $f$ on $T^*S$ given by the Riemannian metric $g_S$. Let us briefly recall this well-known construction (see, for instance, \cite{Gu-Uribe89}, \cite[Section 6.6]{OR04} and references therein). We will use the notation introduced in the previous section.

Recall that the magnetic geodesic flow $\Phi^t : T^*M\to T^*M$ associated with a magnetic system $(g,F)$ is the Hamiltonian flow given by the Hamiltonian
\begin{equation}\label{e:1.2a}
\cH(x,\xi)=\frac{1}{2}|\xi|^2_{g^{-1}}=\frac{1}{2}\sum_{j,k=1}^ng^ {jk}\xi_j\xi_k,
\end{equation}
with respect to the twisted symplectic form on $T^*M$:
\begin{equation}\label{e:1.3}
\Omega_F=\omega+\pi^*_M F.
\end{equation}
Here $\omega$ is the canonical symplectic form on $T^*M$ and $\pi_M :T^*M\to M$ is the canonical projection.

The $S^1$-action on $S$ defines an $S^1$-action on $T^*S$. This action is Hamiltonian with the corresponding momentum map $\mu : T^*S\to T^*_{0}S^1\cong \mathbb R$ given by
\[
\mu(p,\nu)=\left\langle\nu, \frac{\partial}{\partial\theta}\right\rangle,\quad (p,\nu)\in T^*S.
\]
Consider the submanifold
\[
\mu^{-1}(1)=\left\{\nu \in T^*S : \left\langle \nu, \frac{\partial}{\partial\theta}\right\rangle=1\right\}.
\]
It is easy to see that it is $S^1$-invariant. The reduced symplectic manifold $B$ is defined as the manifold of orbits of the induced $S^1$-action on $\mu^{-1}(1)$, $B=\mu^{-1}(1)/S^1$. The reduced symplectic form on $B$ is naturally defined by the restriction of the canonical symplectic form on $T^*S$ to the submanifold $\mu^{-1}(1)$.

The manifold $B$ is diffeomorphic (but not canonically) to the cotangent bundle $T^*M$. A diffeomorphism $T^*M\cong B$ can be constructed by choosing a connection $\alpha$ on the bundle $\Pi : S\to M$. Namely, for any $p\in S$ there is a linear isomorphism
\[
d\Pi^*_p : T^*_xM\stackrel{\cong}{\to} \mathcal H^*_p,\quad \Pi(p)=x,
\] dual to \eqref{e:dPi}. The orbit of the $S^1$-action on $\mu^{-1}(1)$ corresponding to $(x,\xi)\in T^*M$ has the form 
\[
\{\alpha_p+d\Pi^*_p (x,\xi)\in \mu^{-1}(1) : p\in S, \Pi(p)=x\}.
\]

The map $\tilde\Pi : \mu^{-1}(1)\to T^*M$ given by $\tilde\Pi(p,\nu)=(x,\xi)$, where $x =\Pi(p)$ and $d\Pi^*_p (x,\xi)=\nu-\alpha_p$, defines a principal $S^1$-bundle over $T^*M$. One can check that the restriction of the canonical 1-form $\eta_S$ on $T^*S$ to $\mu^{-1}(1)$ defines a connection form on the bundle $\tilde\Pi : \mu^{- 1}(1)\to T^*M$. Moreover, the curvature of this connection coincides (up to the factor $i$) with the twisted symplectic form $\Omega_F$ on $M$ given by 
\eqref{e:1.3}. This easily implies that the reduced symplectic structure on $T^*M$ is given by the form $\Omega_F$. 

If $\alpha^\prime$ is another connection form on $\Pi : S\to M$, then the diffeomorphisms $B\cong T^*M$ given by the forms $\alpha$ and 
$\alpha^\prime$ are related as follows. It is well known that
$\alpha^\prime-\alpha=\Pi^*\sigma$ for some 1-form $\sigma$ on $M$. We define a map $T : T^*M\to T^*M$ by 
\begin{equation}\label{e:defT}
T(x,\xi)=(x, \xi -\sigma_x), \quad (x,\xi)\in T^*M.
\end{equation}
It is easy to check that the map $T$ is compatible with the diffeomorphisms $B\cong T^*M$ determined by the forms $\alpha$ and $\alpha^\prime$, and
\[
T^*\Omega_F=\Omega_{F^\prime},
\]
where $F^\prime=F+d\sigma$ is the curvature of the connection $\alpha^\prime$. 

Let $g_S$ denote the Riemannian metric on $S$ constructed from the Riemannian metric $g$ on $M$ and the connection $\alpha$ on the principal bundle $\Pi : S\to M$ in Section \ref{s:inv}. Denote by $f^t$ the geodesic flow of the Riemannian metric $g_S$ on $T^*S$, i.e. the Hamiltonian flow with Hamiltonian $\frac 12|\nu|^2$ on $T^*S$ with the canonical symplectic structure. For any $t\in \mathbb R$ the diffeomorphism $f^t : T^*S\to T^*S$ takes the submanifold $\mu^{-1}(1)$ into itself. The restriction of $f^t$ to $\mu^{-1}(1)$ commutes with the $S^1$-action on $Z$, thus defining a flow on $B=\mu^{-1 }(1)/S^1$. This flow is called the Hamiltonian reduction of the Riemannian geodesic flow $f^t$. It is a Hamiltonian flow on $B$ equipped with the reduced symplectic structure.
The  connection $\alpha$ defines the isometry $d\Pi^*_p : T^*_xM\stackrel{\cong}{\to} \mathcal H^*_p$. Therefore, the Hamiltonian of the reduced flow is $\frac 12(|\xi|^2+1)$, and thus, up to time change, the flow coincides with the magnetic geodesic flow $\Phi$ on $B\cong T^* M$. It is important here that the connection used to construct the diffeomorphism $B\cong T^*M$ coincides with the connection that defines the metric $g_S$ (see also Example \ref{ex:1} below).

\begin{ex}\label{ex:1}
Let us consider the case of an exact magnetic system, i.e. the case when the Hermitian line bundle $(L,h^L)$ is trivial and the Hermitian connection $\nabla^L$ is written as $\nabla^L=d-i \mathbf A$ with some real 1-form $\mathbf A $. 

Then the bundle $\Pi : S\to M$ has the form
\[
S=M\times S^1=\{(x,v)\in M\times \mathbb C: |v|=1\}, \quad \Pi(x,v)=x.
\]
For any $N\in \ZZ$ the space $E_N$ consists of smooth functions on $S$ of the form
\begin{equation}\label{e:iso-exact}
f(x,e^{i\theta})=s(x)e^{iN\theta}, \quad x\in M, \quad \theta \in \mathbb R/2\pi\mathbb Z,
\end{equation}
where $s\in C^\infty(M)\cong C^\infty(M,L^N)$ (cf. \ref{e:iso1}). 

The connection form $\alpha$ on $S$ is given by 
\[
\alpha(x,v)=d\theta+\mathbf A(x), \quad (x,v)\in S.
\]
The corresponding horizontal subspace has the form:
\[
H_{(x,\theta)}=\{V-\langle \mathbf A(x),V\rangle\frac{\partial}{\partial\theta}: V\in T_{(x,\theta) }S\}.
\]
The subspace $V^*_{(x,\theta)}$ is generated by the covector $\alpha(x,v)$. Therefore, the horizontal de Rham differential $d_{\rm h} : C^\infty(S)\to \Omega^1$ is given by
\[
d_{\rm h} f(x,\theta) =df-\frac{\partial f}{\partial\theta}\alpha=d_Xf- \mathbf A(x)\frac{\partial f}{\partial \theta}.
\]
Finally, the horizontal Laplacian $\Delta_{\rm h}$ has the form
\[
\Delta_{\rm h}=\left(d_X-\mathbf A(x)\frac{\partial}{\partial\theta}\right)^* \left(d_X-\mathbf A(x)\frac{ \partial}{\partial\theta}\right).
\]
It is easy to see that its restriction to $E_N$ corresponds 
under the isomorphism \eqref{e:iso-exact} to the operator $(d-iN\mathbf A)^*(d-iN\mathbf A)$.

Recall that the magnetic geodesic flow $\Phi$ coincides with the Hamiltonian reduction of the geodesic flow $f$ on $T^*S$ given by the Riemannian metric $g_S$. Its concrete realization as a flow on $T^*M$ depends on the choice of a connection on $S$. Above, we used for this the same connection $\alpha$ that was used in the definition of the metric $g_S$. But, in principle, we can use another connection, which will lead to a more complex Hamiltonian, not necessarily $1/2(|\xi|^2+1)$.
For example, in the example under consideration of an exact magnetic system, we can take the trivial connection $\alpha^\prime=d\theta$ to construct a diffeomorphism $T^*M\cong B$. It is easy to see that in this case the reduced symplectic manifold coincides with the manifold $T^*M$ endowed with the canonical symplectic structure, and the reduced Hamiltonian flow coincides with the Hamiltonian flow with the Hamiltonian
$\frac{1}{2}(|\xi-\mathbf A(x)|^2_{g^{-1}}+1)$,
i.e. up to reparametrization with the Hamiltonian flow $\phi^t: T^*M\to T^*M$ with the Hamiltonian $H$ given by \eqref{e:H-Hsub}.
These two realizations are related by the diffeomorphism $T(x,\xi)=(x, \xi -\mathbf A(x))$ (cf. \eqref{e:defT}).
\end{ex}

\subsection{The case of non-zero energy}\label{s:GU}
In \cite{Gu-Uribe89}, Guillemin and Uribe considered another version of the smoothed spectral density of the operator $H_N=\Delta^{L^N}$. For $E> 1$ and $\varphi\in \mathcal S(\RR)$ it is given by
\begin{equation}
\label{e:Yp-GU}
\mathcal Y_{N}(\varphi)=\operatorname{tr} \varphi(\sqrt{\Delta^{L^N}+N^2}-EN).
\end{equation}
For exact magnetic systems, this formula can be rewritten in the following form:
\begin{equation}
\label{e:Yp-GU1}
\mathcal Y_N(\varphi)=\operatorname{tr} \varphi\left(\frac{\sqrt{\mathcal H^\hbar+1}-E}{\hbar}\right).
\end{equation}
Comparing the formulas \eqref{e:defYp} and \eqref{e:Yp-GU1}, one can naturally conclude that the parameters $E$ in \eqref{e:Yp-GU} and $E_0$ in \eqref {e:defYp2} are related by $E=\sqrt{E_0+1}$.

Let us briefly describe the main ideas of \cite{Gu-Uribe89}, based on the application of the method described in Section \ref{s:inv}. We will use the notation and constructions described in this section. Denote by $\Delta_S$ the Laplace-Beltrami operator of the Riemannian metric $g_S$. The equality holds true:
\[
\Delta_S =D^2_\theta + \Delta_{\rm h}.
\]
The operator $\Delta_S$ commutes with the operators $D_\theta$ and $\Delta_{\rm h}$.

Consider the operator $P=\Delta_S^{1/2}$, which is a first order elliptic pseudodifferential operator on $S$.
It is easy to see that the operator $\sqrt{\Delta^{L^N}+N^2}$ corresponds under the isomorphism \eqref{e:iso} to the restriction of the operator $P$ to $E_N$.

Consider the distribution $Y\in \mathcal D^\prime(\mathbb R)$ given by 
\begin{equation}\label{e:YE}
Y(s)=\sum_{N=1}^{\infty}\mathcal Y_N(\varphi)e^{iNs},\quad s\in \mathbb R.
\end{equation}
It is a $2\pi$-periodic distribution of the variable $s$ and belongs to the generalized Hardy class, i.e. its Fourier series contains only positive frequencies.

The distribution $Y\in \mathcal D^\prime(\RR)$ can be interpreted as the distributional trace of the operator $\varphi(P-ED_\theta)e^{is D_\theta}$. For any $f\in C^\infty_c(\RR)$ the equality holds true:
\[
\langle Y, f\rangle =\operatorname{tr} \int_{-\infty}^{+\infty}\varphi(P-ED_\theta)e^{is D_\theta}f(s)ds=2 \pi\operatorname{tr} \varphi(P-ED_\theta)\check{f}(A),
\]
where $\check{f}$ denotes the inverse Fourier transform of $f$. Here the operator $P-ED_\theta$ is not necessarily an elliptic operator, so the operator $\varphi(P-ED_\theta)$ is not, generally speaking, a trace class operator. But the operators $P-ED_\theta$ and $D_\theta$ are commuting, jointly elliptic operators on $S$, which allows us to prove that $\varphi(P-ED_\theta)\check{f}(D_\theta)$ is a smoothing operator and therefore its trace is well defined.

In the paper \cite{Gu-Uribe89} the authors carried out an analysis of the distribution $Y$ in the spirit of the proof of the Duistermaat-Guillemin trace formula \cite{Du-Gu75}, which allowed them to prove the existence of an asymptotic expansion as $N\to\infty$ of the sequence $\mathcal Y_N$ with arbitrary $E>1$ and $\varphi\in \mathcal S(\RR)$ with compactly supported Fourier transform under the condition that the magnetic geodesic flow on the corresponding energy level set is clean (see \cite[Corollary 7.2] {Gu-Uribe89}, as well as \cite[Theorem 2.1]{BU91}). We refer the reader to \cite{Gu-Uribe89,KT19} for more details.

\subsection{The case of zero energy}\label{s:BU} In \cite{BU07} D. Borthwick and A. Uribe investigated the structure of low-lying eigenvalues of the operator $H_N$ under the condition that the form $F$ has maximal rank. Their main goal was to prove the asymptotic expansion of the associated generalized Bergman kernels, but nevertheless, the trace formula at the zero energy level for the function $Y_N(\varphi)$ given by \eqref{e:defYp2-cr} follows immediately from the results of this paper.
Let us briefly describe the main ideas of this work. We will use the notation and constructions described in Section \ref{s:inv}.

The operator $H_N$ corresponds under the isomorphism \eqref{e:iso} to the restriction of the operator
\[
P=\Delta_{\rm h}+(\Pi^*V) D_\theta
\]
to the subspace $E_N$. Here $\Pi^*V\in C^\infty(S)$ denotes the lift of $V$ by $S$. (Note that the corresponding multiplication operator commutes with $D_\theta$.)
Therefore, the operator $\frac{1}{N}H_N$ corresponds to the restriction of  the first-order pseudodifferential operator
\[
(D_\theta)^{-1}P=(D_\theta)^{-1}\Delta_{\rm h}+\Pi^*V
\]
to the subspace $E_N$. Some problem is that the last operator, generally speaking, is not a pseudodifferential operator. It has singularities where the operator $D_\theta$ is not elliptic. However, these singularities are outside the characteristic manifold $\mathcal Z $ of the operator $\Delta_{\rm h}$ (see \eqref{e:char1}), which allows us to replace the operator $(D_\theta)^{-1}P$ with another operator $A$, which is microlocally equal to it outside some neighborhood of the characteristic manifold of $D_\theta$ and is a standard pseudodifferential operator with double symplectic characteristics.

Recall that $\Delta_S$ denotes the Laplace-Beltrami operator of the Riemannian metric $g_S$. The operator
\[
\Delta_S +(\Pi^*V) D_\theta=P + D^2_\theta
\]
is a second-order elliptic operator with a positive principal symbol that commutes with $P$. Therefore, there is defined an operator 
\[
F:= \sqrt{P + D^2_\theta},
\]
such that $F^2-(S + D^2_\theta)$ is a smoothing operator of finite rank. The operator $F$ is a standard first-order elliptic pseudodifferential operator that commutes with $P$ and $D_\theta$.

Let $f\in C^\infty(\mathbb R)$ be a non-negative cutoff function identically equal to zero in a neighborhood of zero. The operator $D_\theta^2F^{-2}$ is a classical zero order pseudodifferential operator. Using well-known results on the functional calculus for zero-order pseudodifferential operators, one can show that the operator
\[
Q:=f(D_\theta^2F^{-2})D_\theta^{-1}
\]
is well defined and is a classical pseudodifferential operator of order $-1$. Moreover, the principal symbol of $Q$ is equal to $\sigma(D_\theta)^{-1}$ in some conic neighborhood of $\mathcal Z$.

We define the operator $A$ by 
\[
A := QP = f(D_\theta^2F^{-2})(D_\theta^{-1}\Delta_{\rm h} +(\Pi^*V)).
\]
Then $A$ is a classical first-order pseudodifferential operator on $S$ with double characteristics. Its principal symbol is $\sigma(\Delta_{\rm h})/\sigma(D_\theta)$ in some conic neighborhood of $\mathcal Z$.

Before stating the main result, let us briefly recall some facts about Fourier integral operators of Hermite type that were introduced in \cite{B74}. These operators differ from the standard Fourier integral operators in that they are associated with isotropic rather than Lagrangian submanifolds of the cotangent bundle. The motivation for such a generalization of Fourier integral operators was the microlocal description of the structure of the Szego projector on the boundary of a pseudoconvex domain given in \cite{BMS76}. The calculus of Fourier integral operators of Hermite type is closely related to the calculus of Fourier integral operators with complex phase \cite{Melin-Sj} and Maslov's complex germ method \cite{Maslov}.

Let $\mathcal V$ be a smooth manifold and $C \subset T^*\mathcal V\setminus\{0\}$ be a homogeneous isotropic submanifold. Assume that $C$ is closed, homogeneous (that is, if $(x,\xi)\in C$, then $(x, \lambda\xi)\in C$ for any $\lambda>0$) and isotropic (that is, the restriction of the canonical symplectic form to $C$ vanishes). A non-degenerate phase function is a function $\psi \in C^\infty(\mathcal V \times B, \mathbb R)$, where $B$ is an open conical subset of $(\mathbb R \times\mathbb R^ n)\setminus\{0\}$ with coordinates $(\tau, \eta)$ satisfying the following conditions:

(1) $\psi(x, \tau, \eta)$ is homogeneous in $(\tau, \eta)$.

(2) $d\psi$ never vanishes.

(3) The critical set of $\psi$ given by 
\[
C_\psi = \{(x, \tau, \eta)\in \mathcal V\times B : (d_\tau\psi)(x,\tau, \eta) = (d_\eta\psi)(x ,\tau, \eta) = 0\},
\]
intersects transversally the subspace $\{\eta= 0\}$.

(4) The map
\[
(x, \tau, \eta)\in \mathcal V\times B \mapsto \left(\frac{\partial \psi}{\partial \tau}, \frac{\partial \psi}{\partial \eta_1 }, \ldots, \frac{\partial \psi}{\partial \eta_n}\right) \in \mathbb R^{n+1},
\]
has rank $n+1$ at each point of $C_\psi$.

We define the map $F : C_\psi\to T^*\mathcal V$ by 
\[
F: (x, \tau, \eta) \mapsto (x, (d_x\psi)(x, \tau, \eta)).
\]
The image under the map $F$ of the subspace $\{\eta = 0\}\cap C_\psi$ is a homogeneous isotropic submanifold $\Sigma$ of the manifold $T^*\mathcal V$ of dimension $n+1$. We will say that the phase function $\psi$ parametrizes the submanifold $\Sigma$.
This definition coincides with the standard definition in the case when the submanifold is Lagrangian.

The space $I^m(\mathcal V, \Sigma)$ of Hermitian Fourier distributions associated with a homogeneous isotropic submanifold $\Sigma\subset T^*M$ consists of generalized functions on $\mathcal V$ that are locally representable as oscillating integrals of the form
\[
\int e^{i\psi(x,\tau,\eta)}a\left(x, \tau, \frac{\eta}{\sqrt{\tau}}\right) d\tau\, d \eta,
\]
where the phase $\psi$ parametrizes the submanifold $\Sigma$ and the amplitude $a(x, \tau, u)\in C^\infty(U \times B)$ satisfies the following conditions:

(1) For any $R>0$, any multi-indices $\alpha$, $\beta$, and $\gamma$, and any compact set $K\subset\subset U$, there exists a constant $C>0$ such that
\[
|D^\alpha_xD^\beta_\tau D^\gamma_ua(x, \tau, u)|\leq C|\tau|^{m-|\gamma|}(1+|u|)^{-R },\quad x\in K. \quad (\tau,u)\in B.
\]

(2) $a(x, \tau, u)$ is equal to zero in the neighborhood of $\tau = 0$.

(3) $a(x, \tau, u)$ admits an asymptotic expansion of the form
\[
a(x, \tau, u)\sim \sum_{i=0}^\infty \tau^{m_i}a_i(x, \tau, u),\quad \tau\to +\infty,
\]
where $m_i\in \frac 12 \mathbb Z$, where $m_0 = m -1/2$, $m_i$ is strictly decreasing, and $m_i \to-\infty$.

The conditions on the phase function guarantee that the wavefront of any Hermitian Fourier distribution of class $I^m(\mathcal V, \Sigma)$ is contained in $\Sigma$. The principal symbol of a Hermitian Fourier distribution from the class $I^m(\mathcal V, \Sigma)$ is a symplectic spinor, which is a half-density along $\Sigma$ tensored by a smooth vector in the metaplectic representation associated with the symplectic normal bundle to $ \Sigma$ \cite{Gu75,BG81}.

In our case, consider the set $\mathcal Z^\Delta \subset T ^*(S\times S)\cong T^*S\times T^*S$ given by (cf. \eqref{e:char1})
\[
\mathcal Z^\Delta =\{(p,r\alpha_p, p, -r\alpha_p)\subset T ^*(S\times S) : p\in S, r>0\}.
\]
It is easy to check that $\mathcal Z^\Delta$ is a homogeneous isotropic submanifold in $T ^*(S\times S)$.

\begin{thm}[\cite{BU07}]
Let $\varphi$ be a function from $\mathcal S(\mathbb R)$ whose Fourier transform is compactly supported. Then the operator $\varphi(A)$ is a Integral Fourier operator of Hermite type with Schwartz kernel belonging to the class $I^{1/2}(S\times S, \mathcal Z^\Delta)$.
\end{thm}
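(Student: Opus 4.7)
The plan is to apply the Fourier inversion formula to reduce the statement about $\varphi(A)$ to a statement about the propagator $e^{itA}$, and then to construct $e^{itA}$ microlocally near the characteristic set $\mathcal Z$ as a Hermite Fourier integral operator using the Boutet de Monvel--Guillemin calculus for operators with double symplectic characteristics.

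\textbf{Reduction near $\mathcal Z$.} Outside any conic neighborhood of $\mathcal Z$ the principal symbol of $D_\theta$ is bounded below, so the principal symbol $\sigma(A) = \sigma(\Delta_{\rm h})/\sigma(D_\theta)$ of $A$ is strictly positive away from $\mathcal Z$ and tends to $+\infty$ on rays. Hence $\varphi(A)$ is smoothing outside any conic neighborhood of $\mathcal Z$, and its wavefront set is contained in the diagonal lifted to $\mathcal Z$, i.e. in $\mathcal Z^\Delta$. It remains to describe the kernel microlocally near $\mathcal Z^\Delta$. Writing
\[
\varphi(A)=\frac{1}{2\pi}\int_{\mathbb R}\hat\varphi(t)\,e^{itA}\,dt,
\]
with $\hat\varphi$ compactly supported, localizes the construction to $t$ in a bounded interval.

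\textbf{Construction of the propagator near $\mathcal Z$.} The principal symbol $\sigma(A)$ vanishes to second order on $\mathcal Z$ and, as discussed earlier (cf. the model-operator analysis in Section~\ref{s:trace0} and the normal-form computation at the end of Section~\ref{s:exact}), its normal Hessian on the symplectic normal bundle $N\mathcal Z=T\mathcal Z^\perp/T\mathcal Z$ is positive-definite. Consequently every point of $\mathcal Z$ is a fixed point of the Hamilton flow of $\sigma(A)$, and the linearized flow on $N\mathcal Z$ is the linear Hamiltonian flow generated by the normal Hessian, whose frequencies are the eigenvalues $a_j$ of the magnetic field operator $J$. This is precisely the setting in which $e^{itA}$ is constructed as a Hermite Fourier integral operator associated to the isotropic submanifold $\mathcal Z^\Delta\subset T^*(S\times S)$: one solves an eikonal equation whose phase parametrizes $\mathcal Z^\Delta$, and a transport equation for a symplectic-spinor amplitude along $\mathcal Z^\Delta$ whose fiberwise component is governed by Mehler's formula for the harmonic oscillator associated with the normal Hessian. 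The fact that $\hat\varphi$ is compactly supported means we only need this construction for $|t|$ bounded, so no issues of caustics or returning trajectories arise.

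\textbf{Integration against $\hat\varphi(t)\,dt$ and order count.} For each $t$, $e^{itA}$ lies in the Hermite FIO class associated to $\mathcal Z^\Delta$; integrating against a compactly supported $\hat\varphi(t)$ stays within this class by the composition rules for Hermite distributions with smooth densities in an additional parameter. The order works out to $1/2$ because integration in $t$ contributes an extra factor of $\tau^{-1/2}$ under the oscillatory integral representation given in the definition of $I^m(\mathcal V,\Sigma)$ (where the radial variable $\tau$ plays the role of the $S^1$-Fourier-dual variable in the construction): the amplitude of $e^{itA}$ has order $0$, and the integration in $t$ is a standard stationary-phase-in-$\tau$ type operation that shifts the order by $+1/2$.

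\textbf{Main obstacle.} The delicate step is the construction of the propagator $e^{itA}$ near the doubly characteristic set $\mathcal Z$. Because $\mathcal Z^\Delta$ is isotropic but not Lagrangian, one cannot use the standard Hörmander FIO calculus directly; one needs the Hermite-distribution formalism of Boutet de Monvel--Guillemin, and in particular the symplectic-spinor transport equation (equivalent to the quantum evolution of the normal harmonic oscillator). Checking that the construction yields a genuine Hermite FIO of the stated order $1/2$, and that the contributions from outside $\mathcal Z$ are smoothing, requires careful bookkeeping in the isotropic symbol calculus; this is the main technical content. Once this is in place, the theorem follows by integrating the resulting Hermite FIO against $\hat\varphi(t)\,dt$.
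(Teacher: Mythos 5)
The paper does not prove this theorem; it only states it and refers to \cite{BU07}, after setting up the operator $A = f(D_\theta^2 F^{-2})(D_\theta^{-1}\Delta_{\rm h} + \Pi^*V)$ and recalling the Hermite FIO formalism. Your proposal is therefore being evaluated on its own merits. Your high-level strategy (Fourier inversion to reduce to $e^{itA}$, then a Hermite-FIO parametrix for the propagator near $\mathcal Z$) is in the right spirit, but there are two concrete gaps.

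First, the reduction step contains an error. You assert that \emph{``outside any conic neighborhood of $\mathcal Z$ the principal symbol of $D_\theta$ is bounded below, so $\sigma(A)=\sigma(\Delta_{\rm h})/\sigma(D_\theta)$ is strictly positive away from $\mathcal Z$.''} This is false: in the splitting $T^*_pS=V^*_p\oplus H^*_p$ one has $\sigma(D_\theta)(p,\nu)=\nu_V$, which vanishes identically on the cone $\{\nu_V=0\}$, a set disjoint from $\mathcal Z=\{\nu_H=0\}$. That is exactly why $A$ is \emph{not} defined as $D_\theta^{-1}P$ but as $QP$ with the cutoff $Q=f(D_\theta^2F^{-2})D_\theta^{-1}$: the formula $\sigma(A)=\sigma(\Delta_{\rm h})/\sigma(D_\theta)$ holds only on the conic neighborhood of $\mathcal Z$ where $f\equiv1$. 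Your argument that $\varphi(A)$ is microlocally smoothing off $\mathcal Z$ therefore needs a different justification that accounts for the behavior of $\sigma(A)$ across the support of $f'$ and on the region where $f$ vanishes; as written, the reasoning breaks down precisely where the definition of $A$ requires care.

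Second, the central claim — that for each fixed $t$ the propagator $e^{itA}$, microlocalized near $\mathcal Z$, is a Hermite Fourier integral operator associated with $\mathcal Z^\Delta$, and that integration against $\hat\varphi(t)\,dt$ produces a distribution in $I^{1/2}(S\times S,\mathcal Z^\Delta)$ — is asserted rather than proved. You correctly observe that $\mathcal Z$ consists of fixed points of the Hamilton flow of $\sigma(A)$ (since $\sigma(A)$ vanishes to second order there) and that the transverse dynamics is a linear Hamiltonian flow with positive-definite quadratic Hamiltonian, suggesting a Mehler-type kernel. But passing from this heuristic to the precise statement that the kernel lies in the Boutet de Monvel isotropic class, with the stated order $1/2$, is exactly the technical content of \cite{BU07}: one must construct the phase (solving a non-Lagrangian eikonal problem parametrizing $\mathcal Z^\Delta$), verify the amplitude hierarchy in the sense of the definition of $I^m$, and track the order shift produced by the $t$-integration. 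Your order argument (``integration in $t$ contributes $\tau^{-1/2}$'') is not substantiated and depends on exactly how the $t$-dependence enters the phase and amplitude. Without this, the proof is a plausible outline, not a proof.
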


This theorem immediately implies the existence of an asymptotic expansion \eqref{e:main-tr} of Theorem \ref{t:trace} for any function $\varphi \in \mathcal S(\mathbb R)$ whose Fourier transform is compactly supported, in the case when $F$ has maximal rank (see \cite{BU07} for more details). 

\begin{rem}\label{r:E0}
For an arbitrary $E_0>0$, the operator $\frac{1}{N}H_N-E_0N$ corresponds to the restriction of the first-order pseudodifferential operator 
\[
(D_\theta)^{-1}\Delta_{\rm h}+\Pi^*V -E_0D_\theta
\]
to the subspace $E_N$. It is quite possible that by applying the methods of \cite{PU95,BU07} to this operator, one can prove an asymptotic formula for the function $Y_N(\varphi)$ given by \eqref{e:defYp2} for $E_0>0$. We will discuss these issues elsewhere. 
\end{rem}

\section{Examples} \label{s:examples}

In this section, we give concrete examples of calculating the trace formula at zero energy level for two-dimensional surfaces of constant curvature with constant magnetic fields. These examples have already been considered in papers \cite{KT19,KT22} in the case of non-zero energy. Therefore, we will be brief in their description, referring the reader to \cite{KT19,KT22} for more details. We will also consider an example of constant magnetic field on a three-dimensional torus as the simplest example of a non-maximal rank magnetic system. 

\subsection{Constant magnetic field on a two-dimensional torus}\label{s:torus}
Consider the two-dimensional torus $\mathbb T^2=\mathbb R^2/\mathbb Z^2$, endowed with the standard flat Riemannian metric
\[
g=dx^2+dy^2.
\]
The magnetic field $F$ is given by 
\[
F=2\pi dx\wedge dy.
\]

The corresponding line bundle $L$ on $\mathbb T^2$ consists of the equivalence classes of triples $(x,y,u)\in \mathbb R^2\times \mathbb C$, where
\[
(x+1,y,u)\sim (x,y,e^{-2\pi iy}u), \quad (x,y+1,u)\sim (x,y,u)
\]
with the projection $L\to \mathbb T^2$ given by $(x,y,u)\in L\mapsto (x,y)\in \mathbb T^2$. The space of its smooth sections is identified with the space of functions $u\in C^\infty(\RR^2)$ such that
\begin{equation}\label{e:torus-sections}
u(x+1,y)=e^{2\pi iy}u(x,y),\quad u(x,y+1)=u(x,y), \quad (x,y)\in \mathbb R^2.
\end{equation}
The Hermitian connection on $L$ is defined by 
\[
\nabla^L=d - 2\pi i x\,dy.
\]
Consider the operator
\[
H_N =\Delta^{L^N}=-\frac{\partial^2}{\partial x^2}-\left(\frac{\partial}{\partial y}-2\pi Ni x\right )^2.
\]

\begin{thm}\label{t:5}
For any $\varphi\in \mathcal S(\RR)$, the smoothed spectral density $Y_N(\varphi)$ of the operator $H_N$ given by \eqref{e:defYp2-cr} has the form
\begin{equation}\label{e:2.1}
Y_N(\varphi)= f_0(\varphi)N,
\end{equation}
where (cf. \eqref{e:KD1a})
\begin{equation}
f_0(\varphi)=\frac{i}{4\pi}\left\langle \frac{1}{\sin 2\pi(t+i0)}, \hat\varphi(t)\right\rangle.\label{e:c0-torus}
\end{equation}
\end{thm}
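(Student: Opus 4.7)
The plan is to exploit the explicit structure of this example: because the metric is flat and the magnetic field is constant, the operator $H_N$ can be diagonalized essentially by hand, and the stated formula then follows directly from the distributional identity \eqref{e:KD1}.

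First, I would decompose sections of $L^N$ in Fourier modes in $y$. Smooth sections correspond to functions $u$ on $\RR^2$ satisfying $u(x+1,y)=e^{2\pi iNy}u(x,y)$ and $u(x,y+1)=u(x,y)$ (the obvious $N$-th tensor power of \eqref{e:torus-sections}). Writing $u(x,y)=\sum_{n\in\ZZ}u_n(x)e^{2\pi iny}$, the periodicity in $y$ holds automatically and the quasi-periodicity in $x$ becomes $u_{n+N}(x+1)=u_n(x)$. On the $n$-th mode, $H_N$ acts as the one-dimensional harmonic oscillator
\[
-\frac{d^2}{dx^2}+4\pi^2N^2\Bigl(x-\tfrac{n}{N}\Bigr)^2,
\]
whose spectrum is $\{2\pi N(2k+1):k\in\ZZ_+\}$, with Hermite-function eigenstates centred at $n/N$. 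Since the quasi-periodicity identifies mode $n+N$ with mode $n$ (up to the shift $x\mapsto x+1$), for each Landau level $k$ there are exactly $N$ linearly independent eigensections, parametrized by $n\bmod N$. Consequently
\[
\sigma(H_N)=\bigl\{2\pi N(2k+1):k\in\ZZ_+\bigr\},\qquad\text{each eigenvalue of multiplicity }N.
\]

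Substituting into the definition \eqref{e:defYp2-cr} gives
\[
Y_N(\varphi)=\sum_{j}\varphi\Bigl(\tfrac{1}{N}\nu_{N,j}\Bigr)=N\sum_{k=0}^{\infty}\varphi\bigl(2\pi(2k+1)\bigr).
\]
This is already of the form $Nf_0(\varphi)$, with no higher-order corrections in $N$; the expansion of Theorem~\ref{t:trace} truncates at leading order, as one should expect since everything is homogeneous and flat.

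Finally, to identify $f_0(\varphi)$ with the distributional expression \eqref{e:c0-torus}, I would apply \eqref{e:KD1} with $m=1$ and $c_1=2\pi$, which reads
\[
\frac{1}{2\pi}\Bigl\langle\frac{1}{\sin 2\pi(t+i0)},\hat\varphi(t)\Bigr\rangle=-2i\sum_{k=0}^{\infty}\varphi\bigl(2\pi(2k+1)\bigr).
\]
Solving for the sum yields exactly $\frac{i}{4\pi}\bigl\langle\frac{1}{\sin 2\pi(t+i0)},\hat\varphi(t)\bigr\rangle$, matching \eqref{e:c0-torus}. The whole argument is essentially a direct calculation; the only mild subtlety is the bookkeeping of multiplicities, which is dictated by the Chern number $N$ of $L^N$ over $\mathbb T^2$. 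As a sanity check, one can verify that this answer agrees with the general formula \eqref{e:KD1a}: here $d=2n=2$, $V\equiv 0$, $a_1\equiv 2\pi$, $\Lambda_k\equiv 2\pi(2k+1)$, and $\mathrm{Vol}(\mathbb T^2)=1$, so $\int_{\mathbb T^2}f_0(x_0)dv_M(x_0)$ reduces precisely to \eqref{e:c0-torus}. The extension from $C^\infty_c(\RR)$ to $\varphi\in\mathcal S(\RR)$ is automatic since the sum $\sum_k\varphi(2\pi(2k+1))$ converges absolutely for Schwartz $\varphi$.
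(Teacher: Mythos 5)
Your proposal is correct and follows the same strategy as the paper's proof: identify the spectrum of $\Delta^{L^N}$ (eigenvalues $2\pi N(2j+1)$, each of multiplicity $N$), substitute into \eqref{e:defYp2-cr}, and invoke \eqref{e:KD1} with $m=1$, $c_1=2\pi$. The only difference is that you actually derive the Landau-level spectrum via Fourier decomposition in $y$ and reduction to shifted harmonic oscillators, whereas the paper simply records the eigenvalues and multiplicities as known facts; your sanity check against \eqref{e:KD1a} and \eqref{e:f0-1} is also consistent with the general theory.
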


\begin{proof}
The operator $\Delta^{L^N}$ has eigenvalues of the form
\[
\nu_{N,j}=2\pi N(2j+1),\quad j=0,1,2,\ldots,
\]
with multiplicity
\[
m_{N,j}=N.
\]
Therefore, the function $Y_N(\varphi)$ is given by 
\[
Y_N(\varphi)=\sum_{j=0}^{\infty}N\varphi(2\pi (2j+1))= f_0(\varphi)N,
\]
where
\[
f_0(\varphi)=\sum_{j=0}^{\infty}\varphi(2\pi (2j+1)).
\]
It remains to use the formula \eqref{e:KD1}.\end{proof}

\subsection{Constant magnetic field on a three-dimensional torus}\label{s:torus3}
Consider the three-dimensional torus $\mathbb T^3=\mathbb R^3/\mathbb Z^3$,
endowed with the standard flat Riemannian metric
\[
g=dx^2+dy^2+dz^2.
\]
Let us assume that the form $F$ is given by
\[
F=2\pi dx\wedge dy.
\]
For the corresponding line bundle $L$ on $\mathbb T^3$, the space of its smooth sections is identified with the space of $u\in C^\infty(\RR^3)$ such that for any $(x,y,z )\in \mathbb R^3$
\[
u(x+1,y,z)=e^{2\pi iy}u(x,y,z),\quad u(x,y+1,z)= u(x,y,z+1 )=u(x,y,z).
\]
The Hermitian connection on $L$ is defined by 
\[
\nabla^L=d - 2\pi i x\,dy.
\]
Consider the operator
\[
H_N =\Delta^{L^N}=-\frac{\partial^2}{\partial x^2}-\left(\frac{\partial}{\partial y}-2\pi Ni x\right )^2-\frac{\partial^2}{\partial z^2}.
\]

\begin{thm}
For any $\varphi\in \mathcal S(\RR)$, the smoothed spectral density $Y_N(\varphi)$ of the operator $H_N$ given by \eqref{e:defYp2-cr} has the form
\[
Y_N(\varphi)= f_0(\varphi)N^{3/2}+\mathcal O(N^{-\infty}),
\]
where (cf. \eqref{e:KD1b})
\[
f_0(\varphi)=\frac{e^{-(1/4)\pi i}}{8\pi^{3/2}}\left\langle \frac{1}{(t+i0)^ {1/2}\sin 2\pi(t+i0)}, \hat\varphi(t)\right\rangle.
\]
\end{thm}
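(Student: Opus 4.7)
The strategy is to separate variables and reduce to the two-dimensional case of Theorem~\ref{t:5}. Any section of $L^N$ over $\mathbb T^3$ expands in a Fourier series in $z$ as $s(x,y,z)=\sum_{k\in\ZZ}s_k(x,y)e^{2\pi ikz}$, where each $s_k$ is a section of the pullback of the 2D line bundle. The operator $H_N$ preserves this decomposition and acts on the $k$-th component as $H_N^{(2)}+(2\pi k)^2$, with $H_N^{(2)}$ the 2D magnetic Laplacian of Section~\ref{s:torus}. Combining the spectrum $\{2\pi N(2j+1):j\in\ZZ_+\}$ of $H_N^{(2)}$ (each eigenvalue of multiplicity $N$) with $\{(2\pi k)^2:k\in\ZZ\}$ of $-\partial_z^2$ yields eigenvalues
\[
\nu_{N,j,k}=2\pi N(2j+1)+(2\pi k)^2,\qquad (j,k)\in\ZZ_+\times\ZZ,
\]
each of multiplicity $N$, so that
\[
Y_N(\varphi)=N\sum_{j=0}^\infty\sum_{k\in\ZZ}\varphi\Bigl(2\pi(2j+1)+\tfrac{(2\pi k)^2}{N}\Bigr).
\]

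Next I apply Poisson summation in $k$. With $g_j(\xi):=\varphi(2\pi(2j+1)+(2\pi\xi)^2/N)$, the substitution $u=2\pi\xi/\sqrt N$ gives
\[
\hat g_j(m)=\frac{\sqrt N}{2\pi}\int_{\RR}\varphi(2\pi(2j+1)+u^2)\,e^{-imu\sqrt N}\,du.
\]
The $m=0$ term produces the leading $N^{3/2}$ contribution $\frac{\sqrt N}{2\pi}\int\varphi(2\pi(2j+1)+u^2)\,du$. For $m\ne 0$, iterated integration by parts using the Schwartz decay of $\varphi$ yields
\[
|\hat g_j(m)|\le C_{M,K}(|m|\sqrt N)^{-M}(1+j)^{-K}
\]
for arbitrary $M,K\ge 0$; summing over $(j,m)$ with $m\ne 0$ and multiplying by the pre-factor $N$ produces an error of $O(N^{-\infty})$.

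It remains to identify the surviving main term
\[
\frac{N^{3/2}}{2\pi}\sum_{j=0}^\infty\int_{\RR}\varphi(2\pi(2j+1)+u^2)\,du
\]
with $f_0(\varphi)N^{3/2}$. Specializing formula~\eqref{e:HDformula-bis} to $m=1$, $\ell=1/2$, $c_1=2\pi$, and simplifying the phase $e^{3\pi(3/4)i}=e^{i\pi/4}$, one obtains
\[
\frac{1}{2\pi}\left\langle\frac{1}{(t+i0)^{1/2}\sin 2\pi(t+i0)},\hat\varphi\right\rangle=\frac{2e^{i\pi/4}}{\sqrt\pi}\sum_{k=0}^\infty\int_{\RR}\varphi(u^2+2\pi(2k+1))\,du.
\]
Solving for the integral sum and multiplying by $e^{-i\pi/4}/(8\pi^{3/2})$ shows that the main term equals $f_0(\varphi)N^{3/2}$ with $f_0$ as in the statement, completing the proof.

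The principal obstacle is making the Poisson remainder estimate uniform in the Landau level index $j\in\ZZ_+$: although fixing $j$ gives arbitrary $O(N^{-\infty})$ decay from the rapid decay of $\widehat{F_j}$, one must additionally exhibit decay in $j$ that is summable over $\ZZ_+$. This follows because $\partial_u^M[\varphi(2\pi(2j+1)+u^2)]$ is a polynomial in $u$ times derivatives of $\varphi$ evaluated at $2\pi(2j+1)+u^2$; the Schwartz property of $\varphi$ then supplies arbitrary polynomial decay in $j$ uniformly in $u$, yielding the joint bound above.
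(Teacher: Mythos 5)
Your proof is correct and follows essentially the same route as the paper: compute the eigenvalues $\nu_{N,j,k}=2\pi N(2j+1)+(2\pi k)^2$ with multiplicity $N$ by separation of variables, apply Poisson summation in $k$, keep only the $m=0$ frequency, and identify the surviving sum with the distribution pairing via \eqref{e:HDformula-bis} at $m=1$, $\ell=1/2$, $c_1=2\pi$. The paper performs the sum over the Landau index $j$ first to form a single Schwartz function $f(x)=\sum_{j}\varphi(2\pi(2j+1)+(2\pi x)^2)$ and applies Poisson summation once, whereas you Poisson-sum for each fixed $j$ and then sum over $j$, which is why you need (and correctly supply) the remark about uniformity in $j$; the two orderings are equivalent and lead to the same leading coefficient.
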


\begin{proof}
The eigenvalues of $H_N$ are computed using the separation of variables:
\[
\nu_{N,j, k}=2\pi N(2j+1)+(2\pi k)^2,\quad j=0,1,2,\ldots,\quad k\in \mathbb Z ,
\]
with multiplicity
\[
m_{N,j,k}=N.
\]
Therefore, the function $Y_N(\varphi)$ has the form
\[
Y_N(\varphi)=\sum_{j=0}^{\infty}\sum_{k\in \mathbb Z} N\varphi(2\pi (2j+1)+\frac{1}{N}( 2\pi k)^2).
\]
We write this formula as
\[
Y_N(\varphi)=\frac 12N\sum_{k\in \mathbb Z}f(\frac{k}{\sqrt{N}}),
\]
where $f\in \mathcal S(\mathbb R)$ is given by
\[
f(x)=\sum_{j=0}^{\infty} \varphi(2\pi (2j+1)+(2\pi x)^2),
\]
and apply the Poisson summation formula. We get
\[
Y_N(\varphi)=\frac 12N^{3/2}\sum_{m\in \ZZ}\hat{f}(2\pi \sqrt{N} m).
\]
Since $\hat{f}(2\pi \sqrt{N} m)=\mathcal O(N^{-\infty})$ for $m\neq 0$, we conclude that
\[
Y_N(\varphi)=\frac 12N^{3/2}\hat{f}(0)+ \mathcal O(N^{-\infty}).
\]
It remains to compute $\hat{f}(0)$ using \eqref{e:HDformula-bis}
\begin{align*}
\hat{f}(0)=\int_{-\infty}^{\infty}f(x)\,dx = & \sum_{j=0}^{\infty}\int_{-\infty}^ \infty \varphi(2\pi (2j+1)+(2\pi x)^2)dx\\ =& \frac{1}{2\pi}\sum_{j=0}^{\infty} \int_{0}^\infty \varphi(2\pi (2j+1)+\xi^2)d\xi\\ =& \frac{e^{-(1/4)\pi i}}{ 4\pi^{3/2}}\left\langle\frac{1}{(t+i0)^{1/2}\sin 2\pi (t+i0)}, \hat\varphi \right\rangle,
\end{align*}
which completes the proof.
\end{proof}

\subsection{Two-dimensional sphere}
Consider the two-dimensional sphere
\[
S^2=\{(x,y,z)\in \mathbb R^3 : x^2+y^2+z^2=R^2\},
\]
endowed with the Riemannian metric $g$ induced by the embedding in the Euclidean space $\RR^3$. In spherical coordinates
\[
x=R\sin\theta \cos\varphi, \quad y=R\sin\theta \sin\varphi, \quad z=R\cos\theta, \quad \theta\in (0,\pi), \varphi\in(0,2\pi),
\]
the metric $g$ has the form
\[
g=R^2(d\theta^2+\sin^2\theta d\varphi^2).
\]
Let us assume that the form $F$ is given by
\[
F=\frac 12\sin\theta d\theta\wedge d\varphi.
\]
The corresponding line bundle $L$ is the line bundle associated with the Hopf bundle $S^3\to S^2$ and the character $\chi : S^1\to S^1$, $\chi(u)=u, u \in S^1$.

\begin{thm}\label{t:6}
For any $\varphi\in \mathcal S(\RR)$, the smoothed spectral density $Y_N(\varphi)$ of $H_N=\Delta^{L^N}$ given by \eqref{e:defYp2-cr} has an asymptotic expansion
\begin{equation}\label{e:3.1}
Y_N(\varphi) \sim \sum_{j=0}^\infty f_j(\varphi)N^{1-j},\quad N\to \infty.
\end{equation}
The coefficients $f_j$ are computed explicitly. For the first two of them we have
\begin{equation}\label{e:c0-sphere}
f_0(\varphi) =\frac{i}{4\pi}\left\langle \frac{1}{\sin \frac{1}{2R^2}(t+i0)}, \hat\varphi( t)\right\rangle,
\end{equation}
\begin{equation}\label{e:c1-sphere}
f_1(\varphi) =\frac{1}{4\pi}\left\langle \left[R^2\frac{d^2}{dt^2}+\frac{1}{4R^{2} }\right]\frac{t}{\sin \frac{1}{2R^2}(t+i0)}, \hat\varphi(t) \right\rangle.
\end{equation}
\end{thm}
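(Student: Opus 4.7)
The strategy mirrors the proof of Theorem \ref{t:5} for the torus: diagonalize $\Delta^{L^N}$ explicitly, write $Y_N(\varphi)$ as a sum over Landau levels, Taylor expand in $1/N$, and reinterpret each term via formula \eqref{e:KD1}. Because $S^2 = SU(2)/U(1)$ is a homogeneous space and $L^N$ is the line bundle associated to the character of $U(1)$ of weight $N$, Frobenius reciprocity gives a decomposition
\[
C^\infty(S^2, L^N) = \bigoplus_{l \in \frac{N}{2}+\ZZ_+} V_l, \qquad \dim V_l = 2l+1,
\]
and the Casimir of $SU(2)$ computes the Bochner Laplacian as $\Delta^{L^N}|_{V_l} = \frac{1}{R^2}[l(l+1) - N^2/4]$. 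Setting $l = N/2 + j$ with $j\in\ZZ_+$ this yields
\[
\nu_{N,j} = \frac{1}{R^2}\left[j^2 + (N+1)j + \tfrac{N}{2}\right], \qquad m_{N,j} = N + 2j + 1.
\]

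Next, I would write $\frac{1}{N}\nu_{N,j} = a_j + b_j/N$ with $a_j = (2j+1)/(2R^2)$ and $b_j = j(j+1)/R^2$, so that
\[
Y_N(\varphi) = \sum_{j=0}^\infty (N + 2j + 1)\,\varphi\!\left(a_j + \tfrac{b_j}{N}\right).
\]
Taylor expanding $\varphi$ around $a_j$ and collecting terms by powers of $1/N$ produces formal sums $\sum_j P(j)\,\varphi^{(\ell)}(a_j)$ with $P$ polynomial. Each such sum converges absolutely because $\varphi$ is Schwartz and $a_j\to\infty$, and a standard remainder estimate (bounding $(b_j/N)^{r+1}\varphi^{(r+1)}$ on $[0,\infty)$ via Schwartz decay) validates the expansion \eqref{e:3.1}. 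The first two terms are
\[
Y_N(\varphi) = N\sum_{j\geq 0}\varphi(a_j) + \sum_{j\geq 0}\left[(2j+1)\varphi(a_j) + \tfrac{j(j+1)}{R^2}\varphi'(a_j)\right] + O(1/N).
\]

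Finally I would convert these sums into the distributional form of \eqref{e:c0-sphere}--\eqref{e:c1-sphere}. Formula \eqref{e:KD1} applied with $a = 1/(2R^2)$ gives
\[
\sum_{j\geq 0}\varphi((2j+1)a) = \frac{i}{4\pi}\left\langle \frac{1}{\sin a(t+i0)}, \hat\varphi(t)\right\rangle,
\]
which is exactly \eqref{e:c0-sphere}. For $f_1$, use that multiplication by $t$ in the Fourier variable corresponds to $-i\partial_\lambda$, so pairings against $t/\sin a(t+i0)$ yield sums of $\varphi'((2k+1)a)$. Integration by parts in the distributional bracket then shows that applying the operator $R^2\partial_t^2 + \frac{1}{4R^2}$ to $t/\sin a(t+i0)$ reproduces precisely the combination $(2k+1)\varphi((2k+1)a) + \tfrac{k(k+1)}{R^2}\varphi'((2k+1)a)$, by virtue of the algebraic identity $((2k+1)^2 - 1)/2 = 2k(k+1)$. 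This yields \eqref{e:c1-sphere}; the higher $f_r$ come from further Taylor terms and admit analogous, though bulkier, distributional expressions.

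The work here is bookkeeping rather than analysis: the main obstacle is matching the combinatorial structure of the Taylor coefficients in $b_j$ with the distributional operator $R^2\partial_t^2 + \frac{1}{4R^2}$, in particular recognizing the identity $((2k+1)^2-1)/2 = 2k(k+1)$. The Schwartz decay of $\varphi$ makes all convergence and remainder issues routine, and no semiclassical microlocal machinery is needed because the spectrum is known in closed form.
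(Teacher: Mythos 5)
Your proof is essentially the same as the paper's: diagonalize $\Delta^{L^N}$ explicitly (same eigenvalues $\nu_{N,j}=\frac{1}{R^2}[j(j+1)+\frac{N}{2}(2j+1)]$ with multiplicity $N+2j+1$, which you obtain via the Casimir on $SU(2)/U(1)$ while the paper simply states them), Taylor-expand $\varphi(\frac{1}{N}\nu_{N,j})$ in powers of $1/N$, and convert the resulting sums over $j$ into distributional pairings via \eqref{e:KD1}. The identifications of $f_0$ and $f_1$ and the elementary algebraic identity you use to match the $\varphi'$-coefficients with the operator $R^2\partial_t^2+\frac{1}{4R^2}$ are all consistent with the paper's computation, so the argument is correct and follows the same route.
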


\begin{proof}
The spectrum of $\Delta^{L^N}$ consists of the eigenvalues
\[
\nu_{N,j}=\frac{1}{R^2}\left[j(j+1)+\frac{N}{2}(2j+1)\right], \quad j=0 ,1,2, \ldots,
\]
with multiplicity
\[
m_{N,j}=N+2j+1.
\]
Therefore, the function $Y_N(\varphi)$ has the form:
\begin{equation} \label{e:sphere-Yp}
Y_N(\varphi)=\sum_{j=0}^\infty(N+2j+1)\varphi\left(\frac{1}{R^2}\left[\frac{1}{N}j (j+1)+\frac{1}{2}(2j+1)\right]\right).
\end{equation}
The Taylor series expansion gives us the asymptotic expansion
\begin{multline*}
\varphi\left(\frac{1}{R^2}\left[\frac{1}{N}j(j+1)+\frac{1}{2}(2j+1)\right]\right)\\ \sim \sum_{k=0}^{\infty}\frac{1}{k!}\frac{1}{R^{2k}}\frac{1}{N^k} \varphi^{(k)}\left(\frac{1}{2R^2}(2j+1)\right)j^k(j+1)^k.
\end{multline*}
Substituting it into \eqref{e:sphere-Yp} proves
the existence of the asymptotic expansion \eqref{e:3.1}.

Its leading coefficient is given by  
\[
f_0(\varphi) = \sum_{j=0}^\infty\varphi\left(\frac{1}{2R^2}(2j+1)\right)
\]
whence using \eqref{e:KD1} we get \eqref{e:c0-sphere}.

For the next coefficient $c_1(\varphi)$ we have
\[
f_1(\varphi) =\frac{1}{R^{2}}\sum_{j=0}^\infty \varphi^{\prime}\left(\frac{1}{2R^2}(2j +1)\right)j(j+1)+\sum_{j=0}^\infty \varphi\left(\frac{1}{2R^2}(2j+1)\right)(2j+1 ),
\]
whence we obtain \eqref{e:c1-sphere} using \eqref{e:KD1} and properties of the Fourier transform.
\end{proof} 

As an illustration, we compute the linearized flow.
This magnetic system is exact in spherical coordinates, and the principal symbol given by \eqref{e:H-Hsub} has the form
\[
H(\theta,\varphi,p_\theta,p_\varphi)=\frac{1}{R^2}p^2_\theta+\frac{1}{R^2\sin^2\theta}\left (p_\varphi+\frac 12\cos\theta\right)^2, \quad (\theta,\varphi,p_\theta,p_\varphi)\in T^*S^2.
\]
The characteristic manifold $X_0$ is given by   (cf. \eqref{e:Sigma-exact})
\[
p_\varphi+\frac 12\cos\theta=0, \quad p_\theta=0.
\]
The spherical coordinates $(\theta,\varphi)$ determine the coordinates on $X_0$ by the map
\[
j(\theta,\varphi)=(\theta,\varphi,0,-\frac 12\cos\theta)\in X_0.
\]
The Hamiltonian flow $\phi^t$ with the Hamiltonian $H$ is given by the system of equations
\begin{equation}\label{e:lin-var}
\begin{aligned}
\dot{\theta}=&\frac{2}{R^2}p_\theta, \quad \dot{\varphi}=\frac{2}{R^2\sin^2\theta}\left( p_\varphi+\frac 12\cos\theta\right),\\
\dot{p}_\theta=&\frac{2\cos\theta}{R^2\sin^3\theta}\left(p_\varphi+\frac 12\cos\theta\right)^2+\frac{1}{R^2\sin\theta}\left(p_\varphi+\frac 12\cos\theta\right), \quad \dot{p}_\varphi=0.
\end{aligned}
\end{equation}
Any point $j(\theta_0,\varphi_0)\in X_0$ is a fixed point of the flow, and thus defines a constant solution of the Hamiltonian system \eqref{e:lin-var}:
\[
\theta(t)=\theta_0,\quad \varphi(t)=\varphi_0,\quad p_\theta(t)=0, \quad p_\varphi(t)=-\frac 12\cos\theta_0.
\]
Computing the system of variational equations for the system \eqref{e:lin-var} along this solution, we obtain a system of first-order differential equations defining the flow $d\phi_{t,j(\theta_0,\varphi_0)}$ on $T_{j (\theta_0,\varphi_0)}(T^*S^2)$:
\begin{equation}\label{e:dphit}
\begin{aligned}
\dot{\Theta}=& \frac{2}{R^2}P_\theta, \quad \dot{\Phi}=\frac{2}{R^2\sin^2\theta_0}\left( P_\varphi-\frac 12\sin\theta_0 \Theta\right),\\
\dot{P}_\theta=& \frac{1}{R^2\sin\theta_0}\left(P_\varphi-\frac 12\sin\theta_0 \Theta\right), \quad \dot{P }_\varphi=0.
\end{aligned}
\end{equation}
The tangent space $T_{j(\theta_0,\varphi_0)}X_0$ is given by  
\[
P_\theta=0, \quad P_\varphi-\frac 12\sin\theta_0 \Theta=0,
\]
therefore, as linear coordinates on $N_{j(\theta_0,\varphi_0)}X_0$ we can take
\[
\hat P_\theta=P_\theta, \quad \hat P_\varphi=P_\varphi-\frac 12\sin\theta_0 \Theta=0.
\]
From \eqref{e:dphit} we obtain a system of first-order differential equations defining the linearized flow $d\phi_{t,j(\theta_0,\varphi_0)}$ on $N_{j(\theta_0,\varphi_0)}X_0$:
\[
\dot{\hat{P_\theta}}=\frac{1}{R^2\sin\theta_0}\hat P_\varphi. \quad \dot{\hat{P_\varphi}}=-\frac{1}{R^2}\sin\theta_0\hat P_\theta.
\]
Thus, the matrix of the linear map $d\phi_{t,j(\theta_0,\varphi_0)}$ of the space $N_{j(\theta_0,\varphi_0)}X_0$ in the coordinates $(\hat P_\theta, \hat P_\varphi)$ has the form (cf. \eqref{e:lin-flow})
\[
d\phi_{t,j(\theta_0,\varphi_0)}=
\begin{pmatrix}
\cos \frac{1}{R^2}t & \frac{1}{\sin\theta_0}\sin \frac{1}{R^2}t\\
- \sin\theta_0 \sin\frac{1}{R^2}t &\cos \frac{1}{R^2}t
\end{pmatrix}.
\]

\subsection{Hyperbolic plane}
Consider the hyperbolic plane $\mathbb H=\{(x,y)\in \RR^2 : y>0\}$ endowed with the Riemannian metric
\[
g=\frac{R^2}{y^2}(dx^2+dy^2).
\]
Let $\Gamma\subset PSL(2,\mathbb R)$ be a cocompact lattice acting freely on $\mathbb H$ and $M=\Gamma\setminus\mathbb H$ be the corresponding Riemann surface.

We define the form $F$ on $M$ so that its lift to $\mathbb H$ has the form
\[
\tilde F=\frac{dx\wedge dy}{y^2}.
\]
Sections of the Hermitian bundle $L$ on $M$ are identified with functions $\psi$ on $\mathbb H$ satisfying the condition
\[
\psi(\gamma z)=\psi(z)\exp(-2i\arg(cz+d))=\left(\frac{cz+d}{|cz+d|}\right)^{- 2}\psi(z)
\]
for any $z=x+iy\in \mathbb H$ and $\gamma=\begin{pmatrix} a & b\\ c & d\end{pmatrix}\in \Gamma$.

Define the connection form by 
\[
A=\frac{1}{y}dx.
\]

\begin{thm}\label{t:hyper}
For any function $\varphi\in \mathcal S(\RR)$, the smoothed spectral density $Y_N(\varphi)$ of the operator $H_N=\Delta^{L^N}$ given by \eqref{e:defYp2-cr} has an asymptotic expansion
\begin{equation}\label{e:4.1}
Y_N(\varphi) \sim \sum_{j=0}^\infty c_j(\varphi)N^{1-j}, \quad N\to \infty,
\end{equation}
The coefficients $c_j$ are computed explicitly. For the first two of them we have
\begin{equation}\label{e:c0-hyper}
f_0(\varphi) =\frac{i}{2\pi}(g-1)\left\langle \frac{1}{\sin \frac{1}{2R^2}(t+i0)}, \hat\varphi(t)\right\rangle.
\end{equation}
\begin{multline}\label{e:c1-hyper}
f_1(\varphi) =\\ -\frac{1}{2\pi}(g-1)\left\langle \left(\left[R^2\frac{d^2}{dt^2}+ \frac{1}{4R^{2}}\right]t-R^2\frac{d}{dt}\right)\frac{1}{\sin \frac{1}{2R^2}(t+ i0)} , \hat\varphi(t) \right\rangle.
\end{multline}
\end{thm}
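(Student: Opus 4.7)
The plan is to follow closely the proof of Theorem~\ref{t:6} for the sphere: to obtain an explicit description of the spectrum of $H_N=\Delta^{L^N}$, substitute it into \eqref{e:defYp2-cr}, and expand asymptotically in $1/N$. The required input is the well-known spectral decomposition of the magnetic Laplacian on a compact quotient of the hyperbolic plane. Using the identification of sections of $L^N$ with automorphic forms of weight $N$ on $\Gamma$ (equivalently, the representation-theoretic decomposition of $L^2(\Gamma\setminus PSL(2,\RR))$), the spectrum of $H_N$ splits into a finite family of discrete-series ``Landau levels''
\[
\nu_{N,j}=\frac{1}{R^2}\left[\frac{N}{2}(2j+1)-j(j+1)\right], \qquad 0\le j\le \tfrac{N-1}{2},
\]
together with ``principal/complementary-series'' eigenvalues lying above a threshold of order $N^2$. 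The multiplicity $m_{N,j}$ of the $j$-th Landau level is computed by Riemann--Roch (applied to $L^N$ for $j=0$ and to twists by powers of the canonical bundle $K$ for $j\geq 1$) and has the form $m_{N,j}=2(g-1)N+r(j)$, with $r(j)$ an explicit affine function of $j$ independent of $N$.

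For $\varphi\in\mathcal S(\RR)$, the rapid decay of $\varphi$ combined with the polynomial Weyl growth of the eigenvalue counting function for $H_N$ implies that the principal/complementary-series eigenvalues contribute only $\mathcal O(N^{-\infty})$ to $Y_N(\varphi)$, and that the sum over the finitely many Landau levels may be extended to all $j\in\ZZ_+$ modulo the same error. Writing
\[
\nu_{N,j}/N=\frac{2j+1}{2R^2}-\frac{j(j+1)}{NR^2}
\]
and Taylor expanding $\varphi(\nu_{N,j}/N)$ around $(2j+1)/(2R^2)$ in powers of $1/N$, one obtains the asymptotic expansion \eqref{e:4.1} with each coefficient $f_k(\varphi)$ expressible as a finite linear combination of series of the form $\sum_{j\geq 0}P(j)\varphi^{(\ell)}((2j+1)/(2R^2))$ with polynomial weights $P$.

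The leading coefficient is $f_0(\varphi)=2(g-1)\sum_{j=0}^\infty\varphi((2j+1)/(2R^2))$, and applying identity \eqref{e:KD1} with $m=1$, $c_1=1/(2R^2)$ converts it into \eqref{e:c0-hyper}. For $f_1(\varphi)$, the combined contributions of the $r(j)$ piece of the multiplicity and the first-order Taylor term $-j(j+1)\varphi'((2j+1)/(2R^2))/R^2$ are converted, via the Fourier duality $\lambda\leftrightarrow -id/dt$ on $\hat\varphi$, into \eqref{e:c1-hyper}. The main obstacle is Step~1, the identification of the spectrum with the correct multiplicities for all Landau levels: for $j=0$ this is the classical Riemann--Roch count of holomorphic sections of $L^N$, but the higher levels require raising/lowering-operator arguments on $\Gamma\setminus\mathbb H$ together with a precise lower bound on the principal/complementary-series spectrum, both coming from the standard analysis of the Casimir operator on unitary representations of $PSL(2,\RR)$. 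Once these inputs are in hand, the remainder of the proof is a direct variant of the sphere computation.
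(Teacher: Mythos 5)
Your overall strategy coincides exactly with the paper's: (i) write down the explicit spectral decomposition of $\Delta^{L^N}$ on $\Gamma\backslash\mathbb H$ into a finite set of ``Landau levels'' with Riemann--Roch multiplicities plus a ``continuous-type'' piece shifted by $\sim N^2/R^2$; (ii) observe that the latter piece contributes $\mathcal O(N^{-\infty})$ because $\varphi\in\mathcal S$; (iii) Taylor-expand $\varphi(\nu_{N,j}/N)$ in powers of $1/N$ and extend the finite sum over $j$ to $\mathbb Z_+$ up to $\mathcal O(N^{-\infty})$; and (iv) resum via \eqref{e:KD1}. This is precisely the computation in the paper, and your identification of where the nontrivial input lies (the spectral decomposition from discrete-series representation theory/Riemann--Roch) is correct.

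However, the explicit spectral data you quote does not match the paper's set-up. With the paper's choice $\tilde F=dx\wedge dy/y^2$ and metric $g=R^2(dx^2+dy^2)/y^2$, the curvature is $-1/R^2$ and, by Gauss--Bonnet, $\int_M F = 4\pi(g-1)$, hence $\deg L = \frac{1}{2\pi}\int_M F = 2g-2$ (so $L\cong K$ in degree). Sections of $L^N$ therefore correspond to automorphic forms of weight $2N$, and the discrete eigenvalues are
\[
\nu^{(i)}_{N,j} = \frac{1}{R^2}\bigl(N(2j+1)-j(j+1)\bigr),\qquad 0\le j\le N-1,
\]
with multiplicity $m_{N,j}=(g-1)(2N-2j-1)$, exactly as stated in the paper. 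Your formula $\nu_{N,j}=\frac{1}{R^2}\bigl(\frac{N}{2}(2j+1)-j(j+1)\bigr)$ with the range $0\le j\le (N-1)/2$ corresponds instead to weight $N$, i.e.\ to $\deg L=g-1$, which is not the paper's bundle. This changes the scale $a_1$ appearing inside $\sin$ in the final answer: the model-operator formulas \eqref{e:f0-1}, \eqref{e:KD1a} with $a_1=1/R^2$, $\operatorname{Vol}(M)=4\pi(g-1)R^2$ give $f_0=2(g-1)\sum_j\varphi\bigl(\frac{2j+1}{R^2}\bigr)$, i.e.\ $\sin\frac{1}{R^2}(t+i0)$ rather than $\sin\frac{1}{2R^2}(t+i0)$. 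Be aware that the paper's displayed formula \eqref{e:c0-hyper} (and the intermediate $f_0$ in its proof) show $\frac{1}{2R^2}$, which is internally inconsistent with the paper's own eigenvalue formula and with the general theory; the likeliest resolution is a typo there (the argument of $\sin$ should be $\frac{1}{R^2}(t+i0)$). Your formula happens to reproduce the paper's printed $\frac{1}{2R^2}$, but via an eigenvalue formula that does not correspond to the paper's choice of $L$, so the agreement is accidental. You should redo the weight/degree bookkeeping with $\deg L=2g-2$ before filling in the coefficients.
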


\begin{proof}
For any $N\in \NN$ the spectrum of $\Delta^{L^N}$ consists of two parts. Its spectrum on the interval $[0,N^2/R^2]$ consists of the eigenvalues
\[
\nu^{(i)}_{N,j} =\frac{1}{R^2}\left((2j+1)N-j(j+1)\right), \quad 0\leq j
\leq N-1
\]
with multiplicity
\[
m_{N,j}=(g-1)(2N-2j-1), \quad 0\leq j\leq N- 1.
\]
Let $\Delta_M$ be the Laplace-Beltrami operator on $(M,g)$. Denote by $\lambda_\ell, \ell=0,1,2,\ldots.$ its eigenvalues taking multiplicities into account. Then the eigenvalues of $\Delta^{L^N}$ on the half-line $(N^2/R^2, \infty)$ are given by 
\[
\nu^{(c)}_{N,\ell} = \lambda_\ell+\frac{1}{R^2}N^2, \quad \ell=0,1,2,\ldots.
\]

We get the following expression for $Y_N(\varphi)$:
\begin{multline} \label{e:hyper-Yp}
Y_N(\varphi)=\sum_{j=0}^{N-1}(g-1)(2N-2j-1) \times \\ \times \varphi\left(\frac{1}{R^ 2}\left((2j+1)-\frac{1}{N}j(j+1)\right)\right)+\sum_{k=1}^{\infty}\varphi\left(\frac{1}{R^2}N+\frac 1N\lambda_k\right).
\end{multline}
It is clear that
\[
\sum_{k=1}^{\infty}\varphi\left(\frac{1}{R^2}N+\frac 1N\lambda_k\right)=\mathcal O(N^{-\infty}), \quad N\to \infty.
\]
Using the Taylor series expansion as above, we obtain the existence of an asymptotic expansion \eqref{e:4.1} and formulas for the leading coefficient in this expansion
\[
f_0(\varphi) =(2g-2)\sum_{j=0}^{N-1}\varphi\left(\frac{1}{2R^2}(2j+1)\right)
\]
and the following one:
\begin{multline*}
f_1(\varphi)=-2(g-1)\frac{1}{R^{2}}\sum_{j=0}^{\infty} \varphi^{\prime}\left(\frac{ 1}{2R^2}(2j+1)\right)j(j+1)\\
-(g-1)\sum_{j=0}^{\infty} \varphi\left(\frac{1}{2R^2}(2j+1)\right)(2j+1),
\end{multline*}
whence, as above, we obtain \eqref{e:c0-hyper} and \eqref{e:c1-hyper} using \eqref{e:KD1} and properties of the Fourier transform.
\end{proof}

\end{document}